\newtheorem{theorem}{Theorem}[section]
\newtheorem{lemma}[theorem]{Lemma}
\newtheorem{corollary}[theorem]{Corollary}
\theoremstyle{definition}
\newtheorem{remark}[theorem]{Remark}
\newtheorem{example}[theorem]{Example}
\newtheorem*{theorem*}{Theorem}
\newtheorem*{remark*}{Remark}
\newtheorem*{remarks*}{Remarks}
\newtheorem*{definition*}{Definition}
\newcommand*\HYPERskip{&}
\newcommand*\pFq{
\begingroup
\catcode`\,\active
\def ,{\HYPERskip}%
\doHyper
}
\def\doHyper#1#2#3#4#5{%
\, _{#1}F_{#2}\left[\begin{matrix}#3 \smallskip \\  #4\end{matrix} \; ; \; #5\right]%
\endgroup
}
\newcommand{\ZZ}{\mathbb{Z}}     
\newcommand{\RR}{\mathbb{R}}     
\newcommand{\NN}{\mathbb{N}}     
\newcommand{\PP}{\mathbb{P}}      
\newcommand{\QQ}{\mathbb{Q}}      
\newcommand{\CC}{\mathbb{C}}      
\newcommand{\hh}{\mathfrak{h}}  
\def\O{\mathcal O} 
\def\G{\Gamma} 
\def\l{\lambda}
\def\mat#1#2#3#4{\begin{pmatrix} #1&#2\\#3&#4\end{pmatrix}}
\def\smat#1#2#3#4{\left(\begin{smallmatrix} #1&#2\\#3&#4\end{smallmatrix}\right)}
\def\({\left(}
\def\){\right)}
\DeclareMathOperator{\SL}{SL}
\DeclareMathOperator{\GL}{GL}
\newcommand{\PSL}{\mathrm{PSL}}
\title[]{Generalized Ramanujan-Sato Series Arising from Modular Forms }
\author{Angelica Babei}
\address{Department of Mathematics and Statistics, McMaster University, Hamilton, Ontario L8S 4L8, Canada}
\email{babeia@mcmaster.ca}
\author{Lea Beneish} 
\address{Department of Mathematics, University of California, Berkeley, Berkeley, CA 94720, USA}
\email{leabeneish@math.berkeley.edu}
\author{Manami Roy}
\address{Department of Mathematics, Fordham University, Bronx, New York 10458, USA}
\email{mroy17@fordham.edu}
\author{Holly Swisher}
\address{Department of Mathematics, Oregon State University, Corvallis, OR, 97301, USA}
\email{swisherh@oregonstate.edu}
\author{Bella Tobin}
\address{Department of Mathematics, Oregon State University, Corvallis, OR, 97301, USA}
\email{tobinbe@oregonstate.edu}
\author{Fang-Ting Tu}
\address{Department of Mathematics, Louisiana State University, Baton Rouge, LA 70803, USA}
\email{ftu@lsu.edu}
\subjclass[2020]{11F03, 11F11, 11Y60, 33C05, 33C20}
\keywords{Ramanujan-type series, hypergeometric functions, modular forms, arithmetic triangle groups}
\thanks{The authors thank the Women in Numbers 5 Workshop (WIN5) for the opportunity to initiate this collaboration. The work of the first author was supported by a Simons Collaboration Grant (550029, to John Voight). The fourth author is appreciative of support by the National Science Foundation Grant DMS-2101906.}
\begin{document}
\begin{abstract}
Motivated by work of Chan, Chan, and Liu, we obtain a new general theorem which produces Ramanujan-Sato series for $1/\pi$.  We then use it to construct explicit examples related to non-compact arithmetic triangle groups, as classified by Takeuchi. Some of our examples are new, and some reproduce existing examples.
\end{abstract}

\date{\today}

\maketitle

\section{Introduction and Statement of Results}

Ramanujan \cite{Ramanujan} gave a list of infinite series identities of the form
\[
\sum_{k= 0}^{\infty} \frac{(\frac{1}{2})_k (\frac{1}{d})_k (\frac{d-1}{d})_k }{k!^3} (ak+1)(\lambda_d)^k=\frac{\delta}{\pi},
\]
for $d=2,3,4,6,$ where $\l_d$ are singular values that  correspond to elliptic curves with complex multiplication,
$a, \delta$ are explicit algebraic numbers, and $(a)_k$ denotes the rising factorial $(a)_k=a(a+1)\cdots(a+k-1)$.  In fact, a similar identity was given even earlier by Bauer \cite{Bauer}.  Proofs of these formulas were first given by J. Borwein and P. Borwein \cite{BB} and D. Chudnovsky and G. Chudnovsky \cite{CC}, and both approaches rely on the arithmetic of elliptic integrals of the first and second kind, including the Legendre relation at singular values.  Since the 1980's series of this type have been at the forefront of algorithms to compute decimal approximations of $\pi$.  Although Ramanujan indicated that there were general theories behind such series, this remains not fully understood.  Deriving new series for $1/\pi^k$ and the unifying theories underlying such series is an active research area (see \cite{Zudilin}, \cite{ChanCooper} for example).  

In work of Chan, Chan, and Liu \cite{CCL}, motivated by Sato, the authors derive a general series for $1/ \pi$ that admits existing series as special cases.  They give a table of explicitly computed examples, each relating a hauptmodul and hypergeometric function connected to an index $2$ subgroup pair of triangle groups.

Here, we use the method of Chan, Chan, and Liu \cite{CCL} to obtain a new general theorem which produces series formulas for $1/ \pi$ and as an application construct explicit Ramanujan-Sato type formulas for $1/\pi$ related to other non-compact arithmetic triangle groups as classified by Takeuchi \cite{Takeuchi, Takeuchi77}. Some of our examples are new, and some reproduce existing examples.

Throughout the paper we denote the upper half plane by $\hh=\{ \tau = x+iy \in \CC: y > 0\}$.  Our main theorem is the following.

\begin{theorem}\label{thm: general}  
Let $Z(\tau)$, $X(\tau)$, $U(\tau)$ be meromorphic on $\hh$ such that $\frac{1}{2\pi i}\frac{d}{d\tau}X(\tau) = U(\tau)X(\tau)Z(\tau)$ and when $\tau\in D$, a domain of $\hh$, $Z(\tau)= X(\tau)^{\varepsilon_0}(1-X(\tau))^{\varepsilon_1}\sum\limits_{j=0}^\infty A_jX(\tau)^j$ for $\varepsilon_0,\varepsilon_1\in \RR$, $A_j\in \CC$. Further assume there exist $\alpha,\beta\in \CC$, $N,k \in \NN$, and elements  $\gamma = \left(\begin{smallmatrix}a&b\\c&-a\end{smallmatrix}\right), \delta = \left(\begin{smallmatrix}1 & \frac ac(1-N) \\ 0 & 1 \end{smallmatrix}\right) \in \SL_2(\RR)$ such that for any $\tau \in \hh$,
\begin{align*}
\left( Z \vert_k  \gamma \right) (\tau) &= \alpha Z (\tau), \\
\left( Z \vert_k  \delta \right) (\tau) & =  \beta Z\left(\tau\right).
\end{align*}
Set $M_N(\tau):=Z(\tau)/Z(N\tau)$, and let $\tau_0=\frac{a}{c}+ \frac{i}{c\sqrt N}$.  Then it follows that if $\tau_0\in D$,
$$
\frac{ck\sqrt N}{2\pi} =U(\tau_0)X(\tau_0)^{\varepsilon_0}(1-X(\tau_0))^{\varepsilon_1}\sum_{j\geq 0} (2j+a_N) A_j X(\tau_0)^j,
$$
where
\[
a_N = 2\left(\varepsilon_0+\varepsilon_1\frac{X(\tau_0)}{X(\tau_0)-1}\right) -{\frac{\alpha i^k}{\beta N^{k/2}}} X(\tau_0) \left(\frac{dM_N}{dX}\right)\left|_{X=X(\tau_0)}. \right.
\]
Alternatively if $\gamma\tau_0\in D$,
$$
\frac{ck\sqrt N}{2\pi}=X(\gamma\tau_0)^{\varepsilon_0}(1-X(\gamma\tau_0))^{\varepsilon_1}\sum_{j\geq 0} (b_N'j+a'_N) A_j X(\gamma\tau_0)^j,
$$
where
\begin{align*}
b_N' & = 2NU(\gamma\tau_0), \\
a'_N &= 2N U(\gamma\tau_0)\left(\varepsilon_0+\varepsilon_1\frac{X(\gamma\tau_0)}{X(\gamma\tau_0)-1}\right) + \frac{1}{\beta} U(\tau_0)X(\tau_0) \left(\frac{dM_N}{dX}\right) \left|_{X=X(\tau_0)}. \right.
\end{align*}
\end{theorem}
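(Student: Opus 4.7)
The plan is to combine a logarithmic-derivative identity arising from the two automorphy conditions on $Z$ with the $X$-series expansion of $Z$.

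First, I would verify the key point correspondences. Since $\det\gamma = -a^2 - bc = 1$, direct substitution gives $c\tau_0-a = i/\sqrt{N}$, $\gamma\tau_0 = a/c + i\sqrt{N}/c$, and $\delta(N\tau_0) = N\tau_0 + \frac{a(1-N)}{c} = \gamma\tau_0$. Unpacking the slash-$k$ transformations then yields
\[
Z(\gamma\tau_0) = \alpha\,(i/\sqrt{N})^k\,Z(\tau_0), \qquad Z(\gamma\tau_0) = \beta\,Z(N\tau_0),
\]
so in particular $M_N(\tau_0) = \beta N^{k/2}/(\alpha i^k)$. Differentiating $Z(\delta\tau) = \beta Z(\tau)$ in $\tau$ (noting $\delta'(\tau)=1$) and evaluating at $N\tau_0$ also gives $Z'(\gamma\tau_0)/Z(\gamma\tau_0) = Z'(N\tau_0)/Z(N\tau_0)$.

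Next, I would take the logarithmic derivative in $\tau$ of $Z(\gamma\tau) = \alpha(c\tau-a)^k Z(\tau)$, use $\gamma'(\tau) = 1/(c\tau-a)^2$, evaluate at $\tau_0$, and divide by $2\pi i$ to obtain the master identity
\[
\frac{ck\sqrt{N}}{2\pi} = \frac{1}{2\pi i}\frac{Z'(\tau_0)}{Z(\tau_0)} + N\cdot\frac{1}{2\pi i}\frac{Z'(\gamma\tau_0)}{Z(\gamma\tau_0)}.
\]
Writing $H(X) = \sum_j A_j X^j$ and combining $d\log Z/dX = e_0/X - e_1/(1-X) + H'/H$ with the hypothesis $(2\pi i)^{-1}\,dX/d\tau = UXZ$ yields
\[
\frac{1}{2\pi i}\frac{Z'(\tau)}{Z(\tau)} = U(\tau)\,X^{e_0}(1-X)^{e_1}\sum_{j\geq 0}\Big(j + e_0 + e_1\tfrac{X}{X-1}\Big)A_j X^j
\]
at any point where the series expansion of $Z$ is valid. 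Differentiating $M_N(\tau) = Z(\tau)/Z(N\tau)$ separately gives $(2\pi i)^{-1}M_N'/M_N = (2\pi i)^{-1}Z'(\tau)/Z(\tau) - N(2\pi i)^{-1}Z'(N\tau)/Z(N\tau)$ and, via the chain rule through $X$, $(2\pi i)^{-1}M_N'(\tau_0)/M_N(\tau_0) = U(\tau_0)X(\tau_0)Z(\tau_0)\cdot (dM_N/dX)|_{\tau_0}/M_N(\tau_0)$.

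Finally, to get the first formula I would eliminate $Z'(\gamma\tau_0)/Z(\gamma\tau_0)$ between the master identity and the $M_N$-difference relation, producing $ck\sqrt{N}/(2\pi) = 2(2\pi i)^{-1}Z'(\tau_0)/Z(\tau_0) - (2\pi i)^{-1}M_N'(\tau_0)/M_N(\tau_0)$; substituting the series expansion at $\tau_0\in D$ and using $1/M_N(\tau_0) = \alpha i^k/(\beta N^{k/2})$ then collects the $j$-th coefficient into $2j + a_N$. For the alternative form I would instead eliminate $Z'(\tau_0)/Z(\tau_0)$, giving $ck\sqrt{N}/(2\pi) = 2N(2\pi i)^{-1}Z'(\gamma\tau_0)/Z(\gamma\tau_0) + (2\pi i)^{-1}M_N'(\tau_0)/M_N(\tau_0)$, and then rewrite $Z(\tau_0) = (N^{k/2}/(\alpha i^k))Z(\gamma\tau_0)$ via the first automorphy relation; this cleanly collapses $\alpha i^k/(\beta N^{k/2})$ down to $1/\beta$ in $a_N'$ and allows the final series expansion at $\gamma\tau_0\in D$. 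The main bookkeeping obstacle is correctly tracking the powers of $i$ and $\sqrt{N}$ when moving $Z$-values between $\tau_0$, $N\tau_0$, and $\gamma\tau_0$; once these are in hand, the rest is formal manipulation.
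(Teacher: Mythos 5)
Your proposal is correct and follows essentially the same route as the paper: the same master identity $\frac{ck\sqrt N}{2\pi}=\frac{1}{2\pi i}\bigl(\frac{Z'(\tau_0)}{Z(\tau_0)}+N\frac{Z'(\gamma\tau_0)}{Z(\gamma\tau_0)}\bigr)$ obtained by differentiating the $\gamma$-automorphy, the same use of the $\delta$-relation and $\gamma\tau_0=\delta(N\tau_0)$ to evaluate $M_N(\tau_0)=\beta N^{k/2}/(\alpha i^k)$ and convert $Z'(N\tau_0)/Z(N\tau_0)$, and the same elimination yielding the two forms with the $1/\beta$ collapse via $Z(\tau_0)=\frac{N^{k/2}}{\alpha i^k}Z(\gamma\tau_0)$. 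The only cosmetic difference is that you differentiate the automorphy relation directly rather than invoking the paper's Lemma \ref{lem:slashderiv}.
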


\medskip

The rest of the paper is organized as follows.  In Section \ref{sec:prelim} we provide some relevant background material and give a few lemmas which will be useful for proving Theorem \ref{thm: general} and for our applications.  In Section \ref{sec:pf1}, we prove Theorem \ref{thm: general}. In Sections \ref{sec:exs2} and \ref{sec:exs1} we give several applications by constructing explicit examples of Ramanujan-Sato series for $1/\pi$ related to certain arithmetic triangle groups.  The groups we use to construct examples are among the arithmetic triangle groups commensurable with $\PSL_2(\ZZ)\cong(2,3,\infty)$ coming from Takeuchi's class I (those of non-compact type). These have the following subgroup diagram. 
$$
  \begin{diagram}
  \node[2]{(2,6,\infty)}  \arrow{sw,l,-}{2}  \arrow{se,l,-}{2} 
  \node[2]{(2,3,\infty)}   \arrow{sw,l,-}{4}  \arrow{s,l,-}{2} \arrow{se,l,-}{3} 
   \node[2]{(2,4,\infty)}   \arrow{sw,l,-}{2}  \arrow{se,l,-}{2} \\ 
   \node[1]  {(6,6,\infty)} 
   \node[2]  {(3,\infty,\infty)} 
    \node[1]  {(3,3,\infty)} 
   \node[1]   {(2,\infty,\infty)} \arrow{s,r,-}2 
   \node[2]  {(4,4,\infty)} \\ 
   \node[5]{(\infty,\infty,\infty)}
  \end{diagram}
$$
In particular, in Section \ref{sec:exs2} we construct examples of series for $1/\pi$ arising from modular forms for the groups $\G_0(2) \cong (2,\infty,\infty)$, $\G_0(3) \cong (3,\infty,\infty)$, and $\G_0(4) \cong (\infty,\infty,\infty)$. In Section \ref{sec:exs1} we construct examples arising from modular forms for the groups $\rm{PSL}_2(\mathbb{Z}) \cong (2,3,\infty)$, $\G_0(2)^+ \cong (2,4,\infty)$, and $\G_0(3)^+ \cong (2,6,\infty)$.

\section{Preliminaries} \label{sec:prelim}

Suppose $\G$ is a discrete subgroup of $\rm{SL}_2(\RR)$ having genus zero that is commensurable with a subgroup $\Gamma(\O)$ of $\rm{SL}_2(\RR)$ arising from a norm $1$ group of a quaternion order $\O$. The group $\Gamma$ acts on the upper half plane $\hh$ and $\PP^1(\RR)$ by linear fractional transformations $\gamma\cdot \tau=\frac{a\tau+b}{c\tau+d}$, where $\gamma=\smat abcd\in \G $.  A classical result from the theory of compact Riemann surfaces says that when $\Gamma$ has genus zero there exist finitely many elliptic or parabolic elements $r_1,\ldots,r_k$ that generate $\Gamma/\{\pm 1\}$ with the relations
$r_1\ldots r_k=1$, $r_i^{e_i}=1$. We call $(0;e_1,\ldots,e_k)$ the \emph{signature} of $\Gamma/\{\pm 1\}$.  Work of Yang \cite{Yang-Schwarzian} has shown that the modular forms on $\Gamma$ can be expressed in terms of a hauptmodul $t$ of $\G$ and solutions of the Schwarzian differential equation 
$$2Q(t)t'(\tau)^2+\{t,\tau\}=0,$$ where
\[
\{t,\tau\}=\frac{t'''(\tau)}{t'(\tau)}-\frac32\left(\frac{t''(\tau)}{t'(\tau)}\right)^2
\]
is the Schwarzian derivative.    If $\Gamma$ has signature $(0;e_1,e_2,e_3)$, then this differential equation is hypergeometric and one can describe the modular forms on $\Gamma$ using hypergeometric functions. 

Takeuchi \cite{Takeuchi, Takeuchi77} has classified arithmetic triangle groups, including their quaternion orders and inclusion relations between them.  

\subsection{Arithmetic triangle groups and a theorem of Yang} 

Arithmetic triangle groups are certain discrete subgroups of $\PSL_2(\RR)$. Consider integers $e_1, e_2, e_3 >1$, possibly $\infty$, such that $\frac{1}{e_1}+\frac{1}{e_2}+\frac{1}{e_3} <1$. Then there exists a triangle $S$  in $\hh$ with internal angles $\frac{\pi}{e_1}, \frac{\pi}{e_2}$ and $\frac{\pi}{e_3}$, where $\frac{\pi}{\infty}:=0$. The group of symmetries of the tiling of $\hh$ by triangles congruent to $S$ is called a triangle group, and can be presented in terms of generators 
\[ (e_1, e_2, e_3)=\langle r_1, r_2, r_3 \, | \,  r_1^{e_1}=r_2^{e_2}=r_3^{e_3}=r_1r_2r_3=1\rangle,\] where if one of  $e_i=\infty$ for $i=1,2,3$, the relation $r_i^{e_i}=1$ is trivial. Takeuchi \cite{Takeuchi, Takeuchi77} found $85$ triples, up to permutation, where the corresponding group $(e_1, e_2, e_3) \le \PSL_2(\RR)$ is arithmetic. In this context, we define 
$$
\Gamma_0(N):=\left\{A=\mat abcd\in \SL_2(\ZZ):\, A\equiv \mat\ast\ast{0}\ast (\bmod N)  \right\}\slash \{\pm 1\}.
$$
For example, $(2, \infty, \infty) \cong \Gamma_0(2)$ and $(\infty, \infty, \infty) \cong \Gamma_0(4)$. We direct the reader to \cite{Takeuchi, Takeuchi77} for more background on arithmetic triangle groups. 


Throughout, we call an elliptic point of order $\infty$ a cusp. The following theorem of Yang allows us to write modular forms on $T$ in terms of $_2F_1$ hypergeometric functions evaluated at specific Hauptmoduln $X(\tau)$. 

\begin{theorem}[Yang{\cite[Thm. 9]{Yang-Schwarzian}}] \label{theorem: triangle}
  Assume that  $\G$ has signature
  $(0;e_1,e_2,e_3)$. Let $X(\tau)$ be the Hauptmodul of $\G$ with
  values $0$, $1$, and $\infty$ at the elliptic points of order $e_1$,
  $e_2$, and $e_3$ (possibly $\infty$), respectively. Let $k\ge 2$ be an even integer.
  Then a basis for the space of modular forms of weight $k$ on $\G$
  is given by
  $$
    X^{\{k(1-1/e_1)/2\}}(1-X)^{\{k(1-1/e_2)/2\}}X^j\left(
    \pFq21{a&b}{&c}X+CX^{1/e_1} \pFq21{a'&b'}{&c'}X\right)^k,
  $$
  $j=0,\ldots, \mathcal{D}-1$, 
  for some constant $C$, where
  for a rational number $x$, we let $\{x\}$ denote the fractional part
  of $x$,
  $$
    a=\frac12\left(1-\frac1{e_1}-\frac1{e_2}-\frac1{e_3}\right), \qquad
    b=a+\frac1{e_3}, \qquad c=1-\frac1{e_1},
  $$
  $$
    a'=a+\frac1{e_1}, \qquad b'=b+\frac1{e_1}, \qquad
    c'=c+\frac2{e_1},
  $$
and 
  $$
    \mathcal{D}=1-k+\lfloor k(1-1/e_1)/2\rfloor+\lfloor k(1-1/e_2)/2\rfloor+\lfloor k(1-1/e_3)/2\rfloor
  $$
 is the dimension of the space of weight-$k$ modular forms for $\G$. 
\end{theorem}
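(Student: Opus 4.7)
The plan is to exploit the Schwarzian differential equation satisfied by the Hauptmodul $X(\tau)$. Since $\Gamma$ has signature $(0;e_1,e_2,e_3)$ with $X$ taking values $0,1,\infty$ at the three elliptic points, the rational function $Q(X)$ in $2Q(X)X'(\tau)^2+\{X,\tau\}=0$ is forced to be the unique one with poles at $0,1,\infty$ producing exponent differences $1/e_1,1/e_2,1/e_3$. Such a Schwarzian equation is precisely the projective reduction of the Gauss hypergeometric equation
\[
X(1-X)y''+[c-(a+b+1)X]y'-aby=0
\]
with the stated parameters, since one checks directly $1-c=1/e_1$, $c-a-b=1/e_2$, and $b-a=1/e_3$.

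The second step is to realize $\tau$ locally as a Möbius function of a ratio of two Frobenius solutions at $X=0$, namely $y_1(X)={}_2F_1(a,b;c;X)$ and $y_2(X)=X^{1/e_1}\,{}_2F_1(a',b';c';X)$. Let $\tilde y(X)=y_1(X)+C\,X^{1/e_1}{}_2F_1(a',b';c';X)$ denote the particular linear combination corresponding to the denominator of this Möbius map, with $C$ determined by the chosen normalization at the elliptic point of order $e_1$. Abel's formula gives the Wronskian $W(y_1,y_2)=\mathrm{const}\cdot X^{-c}(1-X)^{c-a-b-1}$, and combining this with the chain rule $d\tau/dX=W/\tilde y(X)^2$ yields the key identity
\[
\frac{dX}{d\tau}\;=\;\mathrm{const}\cdot \tilde y(X)^{2}\,X^{\,1-1/e_1}(1-X)^{\,1-1/e_2}.
\]
Since $dX/d\tau$ is a meromorphic weight-$2$ form on $\Gamma$, this identity exhibits $\tilde y(X)^{2}$ as a weight-$2$ form carrying fractional characters at the elliptic points, with those characters absorbed into the powers of $X$ and $1-X$.

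Raising to the $k/2$ power and inserting the correction $X^{\{k(1-1/e_1)/2\}}(1-X)^{\{k(1-1/e_2)/2\}}$ (chosen precisely so that the total exponents on $X$ and on $1-X$ become nonnegative integers when combined with $\lfloor k(1-1/e_i)/2\rfloor$) produces a single-valued, holomorphic weight-$k$ form $F_0(\tau)$. Multiplying $F_0$ by $X^j$ for $j=0,\dots,D-1$ gives $D$ weight-$k$ forms, which are linearly independent because $X$ is a Hauptmodul (distinct $X^j$ are linearly independent as functions on $\Gamma\backslash\hh^*$). That they span $M_k(\Gamma)$ follows from the standard dimension formula on a genus-zero Fuchsian group with three elliptic or parabolic points, which delivers $\dim M_k(\Gamma)=1-k+\sum_{i}\lfloor k(1-1/e_i)/2\rfloor=D$; a check of leading behavior at $X=\infty$ confirms that $X^{D-1}F_0$ just barely stays holomorphic while $X^DF_0$ would develop a pole of forbidden order at the third elliptic point.

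The main obstacle is the monodromy analysis that both pins down the constant $C$ and certifies global single-valuedness: both $y_1$ and $y_2$ individually acquire nontrivial phases upon analytic continuation around $X=0,1,\infty$, and the specific combination $\tilde y$, raised to the $k$-th power and multiplied by the prefactor with fractional-part exponents, must be invariant under the monodromy representation of $\pi_1(\mathbb{P}^1\setminus\{0,1,\infty\})$ that corresponds to the elliptic generators of $\Gamma$. This compatibility is carried out via the classical connection formulas for ${}_2F_1$ among its three regular singular points, and it is exactly the reason the construction produces honest modular forms rather than multi-valued sections.
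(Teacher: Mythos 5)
This statement is quoted verbatim from Yang \cite[Thm.~9]{Yang-Schwarzian}; the paper contains no proof of it, so there is no internal argument to compare yours against. Judged on its own terms, your sketch follows what is essentially Yang's route: identify the Schwarzian equation of the Hauptmodul with the projectivized hypergeometric equation via the exponent differences $1-c=1/e_1$, $c-a-b=1/e_2$, $b-a=1/e_3$ (your parameter checks are correct), invert the Schwarz map, use Abel's formula to get $dX/d\tau=\mathrm{const}\cdot\tilde y(X)^2X^{1-1/e_1}(1-X)^{1-1/e_2}$, raise to the power $k/2$, correct by the fractional-part exponents, and count dimensions. That architecture is sound.

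Two steps are genuinely missing rather than merely compressed. First, the monodromy verification is the actual content of the theorem: it is what makes $X^{\{k(1-1/e_1)/2\}}(1-X)^{\{k(1-1/e_2)/2\}}\,\tilde y(X)^k$ single-valued and holomorphic at the elliptic points rather than a multivalued section, and it is also what pins down the orders of vanishing needed to justify both the holomorphy of $X^{D-1}F_0$ and the failure of $X^{D}F_0$. You name this as "the main obstacle" and then assert it is handled by the connection formulas without carrying out any of it, so the holomorphy and spanning claims are not established. Second, your construction presumes that $y_1$ and $y_2=X^{1/e_1}{}_2F_1(a',b';c';X)$ are independent Frobenius solutions at $X=0$, which fails precisely when $e_1=\infty$: then $c=1$, the local exponents at $X=0$ coincide, the genuine second solution is logarithmic, and $X^{1/e_1}{}_2F_1(a',b';c';X)$ collapses to $y_1$ itself, so $\tilde y$ is not "the denominator of the M\"obius map" in the sense you describe. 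This is not a corner case here --- by Table~\ref{tab:Hauptmoduln} it occurs for $\Gamma_0(2)$, $\Gamma_0(3)$, and $\Gamma_0(4)$, half of the groups to which the paper applies the theorem --- so a complete proof must treat it separately (the statement survives because at a cusp only the holomorphic solution contributes, but that requires its own argument).
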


In Table \ref{tab:ell_pts} we catalog the relevant elliptic point data needed to use Theorem \ref{theorem: triangle} for the groups $\G$ we consider in Sections \ref{sec:exs2} and \ref{sec:exs1}. In Table \ref{tab:ell_pts}, the ``Stabilizing elements" are the generators of the isotropy subgroups of the group $\Gamma$ (the stabilizer in the group $\Gamma$) that fix the corresponding elliptic points under the group action of $\Gamma$ on $\hh \cup \PP^1(\RR)$. The ``Orders" are the orders of the isotropy subgroups. For example, for the group $\G_0(2)$, there is one in-equivalent elliptic point represented by $\frac{1+i}2$ and two in-equivalent cusps, represented by $0$ and $i\infty$. The elliptic point $\frac{1+i}2$  is fixed by all elements of the subgroup $\langle\smat 1{-1}2{-1}\rangle$, the cusp $0$ is fixed by all elements of $\langle\smat {-1}02{-1}\rangle$, and $i\infty$ is fixed by all elements of $\langle \smat 110{1} \rangle$.

\renewcommand{\arraystretch}{1.6}

\begin{table}[h!]
\caption{Elliptic point data}
\label{tab:ell_pts}
\begin{tabular}{|c|c|c|c|}
\hline
Group $\G$ & Elliptic points & Stabilizing elements & Orders \\
\hline
\hline
$\G_0(2) \cong (2,\infty,\infty)$ & $\frac{1+i}{2}$, $0$, $i\infty$ & $\left(\begin{smallmatrix} 1 & -1 \\ 2 & -1\end{smallmatrix}\right)$, $\left(\begin{smallmatrix} -1 & 0 \\ 2 & -1\end{smallmatrix}\right)$, $\left(\begin{smallmatrix} 1 & 1 \\ 0 & 1\end{smallmatrix}\right)$ & $2$, $\infty$, $\infty$ \\
\hline
$\G_0(3) \cong (3,\infty,\infty)$ & $\frac{3+i\sqrt 3}{6}$, $0$, $i\infty$ & $\left(\begin{smallmatrix} -1 & 1 \\ -3 & 2\end{smallmatrix}\right)$, $\left(\begin{smallmatrix} -1 & 0 \\ 3 & -1\end{smallmatrix}\right)$, $\left(\begin{smallmatrix} 1 & 1 \\ 0 & 1\end{smallmatrix}\right)$ & $3$, $\infty$, $\infty$ \\
\hline
$\G_0(4) \cong (\infty,\infty,\infty)$ & $0$, $\frac 12$, $i\infty$ & $\left(\begin{smallmatrix} 1 & 0 \\ 4 & 1\end{smallmatrix}\right)$, $\left(\begin{smallmatrix} 1 & -1 \\ 4 & -3\end{smallmatrix}\right)$, $\left(\begin{smallmatrix} 1 & 1 \\ 0 & 1\end{smallmatrix}\right)$ & $\infty$, $\infty$, $\infty$ \\
\hline
$\rm{PSL}_2(\mathbb{Z}) \cong (2, 3,\infty)$ & $i$, $\frac{-1+i\sqrt{3}}{2}$, $i\infty$ & $\left(\begin{smallmatrix} 0 & -1 \\ 1 & 0\end{smallmatrix}\right)$, $\left(\begin{smallmatrix} 0 & -1 \\ 1& 1\end{smallmatrix}\right)$, $\left(\begin{smallmatrix} 1 & 1 \\ 0 & 1\end{smallmatrix}\right)$ & $2$, $3$, $\infty$ \\
\hline
$\G_0(2)^+ \cong (2,4,\infty)$ & $\frac{i}{\sqrt 2}$, $\frac{1+i}{2}$,  $i\infty$ & $\frac{1}{\sqrt 2}\left(\begin{smallmatrix} 0 & -1 \\ 2& 0\end{smallmatrix}\right)$, $ \sqrt 2\left(\begin{smallmatrix} 0 & -\frac12 \\ 1 & -1\end{smallmatrix}\right)$, $\left(\begin{smallmatrix} 1 & 1 \\ 0 & 1\end{smallmatrix}\right)$  & $2$, $4$, $\infty$ \\
\hline
$\G_0(3)^+ \cong (2,6,\infty)$ & $\frac{i}{\sqrt 3}$, $\frac{3+i\sqrt 3}{6}$,  $i\infty$&  $\frac{1}{\sqrt 3}\left(\begin{smallmatrix} 0 & -1 \\ 3& 0\end{smallmatrix}\right)$, $ \sqrt 3\left(\begin{smallmatrix} 0 & -\frac13 \\ 1 & -1\end{smallmatrix}\right)$, $\left(\begin{smallmatrix} 1 & 1 \\ 0 & 1\end{smallmatrix}\right)$ & $2$, $6$, $\infty$ \\
\hline
\end{tabular}
\end{table}

Each of the groups $\G$ listed in Table \ref{tab:ell_pts} yield a modular curve of genus $0$, and thus the function field of this modular curve is generated by a single modular function for $\G$, a Hauptmodul.  By the theory of Riemann surfaces \cite[Prop. 1.21]{Miranda} any nonconstant modular function for $\G$ that has a single simple pole is a Hauptmodul for $\G$.  

In order to apply Theorem \ref{theorem: triangle} to our groups $\G$, we need to choose a Hauptmodul which takes the values $0$, $1$, and $\infty$ at the elliptic points of $\G$.  Then assign $e_1$, $e_2$, $e_3$ to be the orders of the elliptic points yielding the values $0$, $1$, $\infty$, respectively.  We list this choice of Hauptmodul $X(\tau)$ for each group $\G $, along with the values of $e_1$, $e_2$, $e_3$ in Table \ref{tab:Hauptmoduln}, where $\eta(\tau)$ is the Dedekind eta function $ \eta(\tau):= e^{\pi i \tau/12}\prod_{n =1}^\infty (1-e^{2\pi i n\tau})$.  That these are Hauptmoduln can be checked directly or observed in \cite{Maier} and \cite{ConwayNorton}. 

\renewcommand{\arraystretch}{1.6}

\begin{table}[h!]
\caption{Choice of Hauptmoduln}
\label{tab:Hauptmoduln}
\begin{tabular}{|r|l|c|}
\hline
Group $\G$ & Hauptmodul& $e_1$, $e_2$, $e_3$ \\
\hline
\hline
$\G_0(2) \cong (2,\infty,\infty)$ & $t_2(\tau) = -64\eta(2\tau)^{24}/\eta(\tau)^{24}$ & $\infty$, $2$, $\infty$\\
\hline
$\G_0(3) \cong (3,\infty,\infty)$ & $t_3(\tau) = -27\eta(3\tau)^{12}/\eta(\tau)^{12}$ & $\infty$, $3$, $\infty$ \\
\hline
$\G_0(4) \cong (\infty,\infty,\infty)$ & $t_\infty(\tau) = 16\eta(\tau)^8\eta(4\tau)^{16}/\eta(2\tau)^{24}$ & $\infty$, $\infty$, $\infty$ \\
\hline
$\rm{PSL}_2(\mathbb{Z}) \cong (2, 3,\infty)$ & $t_{2,3}(\tau) = 1728/j(\tau)$ & $\infty$, $2$, $3$  \\
\hline
$\G_0(2)^+ \cong (2,4,\infty)$ & $t_{2,4}(\tau) = 256\eta(\tau)^{24}\eta(2\tau)^{24}/(\eta(\tau)^{24}+64\eta(2\tau)^{24})^2$ & $\infty$, $2$, $4$\\
\hline
$\G_0(3)^+ \cong (2,6,\infty)$ & $t_{2,6}(\tau) = 108\eta(\tau)^{12}\eta(3\tau)^{12}/(\eta(\tau)^{12}+27\eta(3\tau)^{12})^2$ & $\infty$, $2$, $6$\\
\hline
\end{tabular}
\end{table}

We note the following relationship between the $j$-function and $t_2$, as defined in Table \ref{tab:Hauptmoduln}, which was observed by Maier \cite{Maier}\footnote{The $t_2$ in \cite{Maier} is equal to a constant multiple of $-2^6$ from our $t_2$}
\begin{equation}
    \label{eqn: jt2reln}
    j=\frac{64(4t_2-1)^3}{t_2}.
\end{equation}

We now use Theorem \ref{theorem: triangle} and the Hauptmoduln in Table \ref{tab:Hauptmoduln} to compute generators of spaces of modular forms for our groups $\G$.  For each group, we choose the least weight $k$ that yields nontrivial modular forms.  The generators obtained are listed in Table \ref{tab:gens}.  To save space, we use the notation 
\[
F[a,b;c;x]:= \pFq21{a & b}{&c}x.
\]

\begin{table}[h!]
\caption{Hypergeometric Modular Form Generators}
\label{tab:gens}
\begin{tabular}{|c|c|c|}
\hline
Group $\G$ & Weight $k$ & Generator(s) of weight $k$ modular forms for $\G$ \\
\hline
\hline
$\G_0(2) \cong (2,\infty,\infty)$ & $2$ & $(1-t_2)^\frac12 F[\frac14, \frac14; 1; t_2]^2$ \\
\hline
$\G_0(3) \cong (3,\infty,\infty)$ & $2$ & $(1-t_3)^\frac23 F[\frac13, \frac13; 1; t_3]^2$ \\
\hline
$\G_0(4) \cong (\infty,\infty,\infty)$ & $2$ & $F[\frac12, \frac12; 1; t_\infty]^2$, $t_\infty F[\frac12, \frac12; 1; t_\infty]^2$  \\
\hline
$\rm{PSL}_2(\mathbb{Z}) \cong (2, 3,\infty)$ & $4$ & $F[\frac{1}{12}, \frac{5}{12}; 1; t_{2,3}]^4$ \\
\hline
$\G_0(2)^+ \cong (2,4,\infty)$ & $4$ & $F[\frac18, \frac38;1;t_{2,4}]^4$ \\
\hline
$\G_0(3)^+ \cong (2,6,\infty)$ & $4$ & $F[\frac16, \frac13; 1; t_{2,6}]^4$ \\
\hline
\end{tabular}
\end{table}

\subsection{Connection to known modular forms and their properties}

In our examples, we will use some known modular forms. For even $k \ge 4$, the Eisenstein series of weight $k$  is a modular form (of weight $k$) for $\PSL_2(\ZZ)$ defined \cite{DiamondShurman} as
 \begin{equation}
 \label{eqn: eseries}
    E_k(\tau) := \frac{1}{2\zeta(k)} \sum_{(m, n) \in \ZZ^2-(0,0)} \frac{1}{(m\tau+n)^k}=1-\frac{2k}{B_k}\sum_{n\ge 1} \sigma_{k-1}(n)e^{2\pi i n \tau},  
 \end{equation} where $\zeta(s)$ is the Riemann zeta function,  $B_k$ is the $k$th Bernoulli number, and $\sigma_{k-1}(n)$ is the arithmetic function
 \[\sigma_{k-1}(n)=\sum_{d|n} d^{k-1}.\] The latter equality in \eqref{eqn: eseries} follows since $E_k(\tau)$ converges absolutely for $k \ge 4$.

When $k=2$, we define $$E_2(\tau)=1-24\sum_{n\ge 1} \sigma_1(n)e^{2\pi i n \tau}.$$ For any positive number $N$, 
\begin{equation}
\label{eqn: E2N}
E_{2, N}:= E_2(\tau)-NE_2(N\tau)
\end{equation} is a modular form of weight $2$ for $\Gamma_0(N)$ \cite[Exercise 1.2.8]{DiamondShurman}.

The Jacobi $\theta$-functions are defined as 
	$$
	\theta_2(\tau):=\sum_{n\in\mathbb Z} e^{\pi i (n+1/2)^2 \tau},\quad
	\theta_3(\tau):=\sum_{n\in\mathbb Z} e^{\pi i n^2\tau}, \quad
	\theta_4(\tau):=\sum_{n\in\mathbb Z} (-1)^ne^{\pi i n^2\tau}.
	$$ They can be expressed as eta quotients via \cite[p.29]{BGHZ} 
	\begin{equation}\label{eqn: thetaeta}
	    \theta_2(\tau)=2\frac{\eta(2\tau)^2}{\eta(\tau)}, \quad \theta_3(\tau)=\frac{\eta(\tau)^5}{\eta(\tau/2)^2\eta(2\tau)^2}, \quad \theta_4(\tau)=\frac{\eta(\tau/2)^2}{\eta(\tau)},
	\end{equation}
	and satisfy the Jacobi identity \cite[p.28]{BGHZ}
	
		\begin{equation}\label{eqn: jacobiidentity}
	    \theta_2^4+\theta_4^4=\theta_3^4.
	\end{equation}

Moreover, the modular $\l$-function is given by
\begin{equation}
    \label{defn: lambda}
    \lambda=\left(\frac{\theta_2}{\theta_3}\right)^4,\quad 	1-\lambda =\left(\frac{\theta_4}{\theta_3}\right)^4,
\end{equation}
	and satisfies the transformation formula  \cite[p.109]{Chandrasekharan}
\begin{equation}
    \label{eqn: lambdatransform}
    \lambda\left( \frac{-1}{\tau}\right)=1-\lambda(\tau).
\end{equation}
	
The function $\lambda(\tau)$ is a Hauptmodul for the group $\G(2)$, which has cusps 0, 1,  $i\infty$ at which the values of $\l$ are $1$, $\infty$, $0$ respectively \cite[Chapter VII, \S 7-8]{Chandrasekharan}.  Moreover, the function $\lambda(2\tau)$ is a Hauptmodul for $\Gamma_0(4)$, and one can check $\lambda(2\tau)$ equals the choice of Hauptmodul $t_\infty$ for $\Gamma_0(4)$ given in Table \ref{tab:Hauptmoduln}. Furthermore, each $\theta_i(2\tau)$ is a modular form of weight $1/2$ for $\Gamma_0(4)$ \cite[p.28]{BGHZ}.

We also note the following classical results,
	
\begin{equation}
\label{eqn: thetaE2}
	\theta_3^4(2\tau)=\frac{4E_2(4\tau)-E_2(\tau)}{3},\quad \theta_3^4(2\tau)+\theta_2^4(2\tau)=\frac 12 \left(\theta_3^4(\tau)+ \theta_4^4(\tau)\right)=-E_{2,2}(\tau),
\end{equation}
	
\begin{equation}
\label{eqn: thetaE23}
	(\theta_3(2\tau)\theta_3(6\tau)+\theta_2(2\tau)\theta_2(6\tau))^2=\frac{1}{2} E_{2,3}(\tau) = -\frac{(3\eta(3\tau)^3+\eta(\tau/3)^3)^2}{\eta^2(\tau)},
\end{equation}

\begin{equation}
\label{eqn: thetahypgeom}  
\pFq{2}{1}{\frac 1{2} &\frac 12}{&1}{\lambda(2\tau)}^2=\theta_3(2\tau)^4, \quad  (1- \l(\tau)) \pFq{2}{1}{\frac 1{2} &\frac 12}{&1}{\lambda(2\tau)}^2=\theta_4(2\tau)^4.
\end{equation}

Identities such as those above can be checked using Sturm-type bound arguments, which we employ throughout the article. A theorem of Choi and Kim \cite{ChoiKim} gives a Sturm-type bound for computationally determining when modular forms on genus $0$ groups $\G_0(N)^+$ are equal. Since their bound is less than Sturm's bound \cite{Sturm} for $\G_0(N)$, we give special cases of these results as follows. 

Given a formal sum $f=\sum_{n\gg -\infty} c(n)q^n$, define
\[
\text{ord}_{i\infty} f:= \inf\{n\in \ZZ \mid c(n)\neq 0 \}.
\]

\begin{theorem}[Sturm \cite{Sturm} and Choi, Kim \cite{ChoiKim}] \label{thm:Sturm}

Let $N\in \NN$ such that $\G_0(N)^+$ has genus $0$. Suppose $f$ is a meromorphic modular form of weight $k$ for $\G\in \{\G_0(N),\G_0(N)^+\}$ having poles only at the cusp $i\infty$. If $f$ has integer Fourier coefficients and
\[
\begin{cases}
\text{ord}_{i\infty} f > \frac{k}{12}[\SL_2(\ZZ):\G_0(N)] & \text{when } \G=\G_0(N), \\
\text{ord}_{i\infty} f > \frac{k}{24}[\SL_2(\ZZ):\G_0(N)] & \text{when } \G=\G_0(N)^+,
\end{cases}
\]
then $f=0$.
\end{theorem}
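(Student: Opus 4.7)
The proof reduces to the classical valence formula (the $k/12$-formula) applied to $f$, once we observe that the hypothesis actually forces $f$ to be holomorphic on the compactified modular curve $X(\Gamma)$. Specifically, for $k\ge 1$ the quantity $\frac{k}{12}[\SL_2(\ZZ):\G_0(N)]$ (resp.\ $\frac{k}{24}[\SL_2(\ZZ):\G_0(N)]$) is strictly positive, so the stated order bound gives $\text{ord}_{i\infty}(f) \ge 1$. Combined with the assumption that $f$ has poles only at $i\infty$, this means $f$ is holomorphic on $\hh$ and at every cusp. The degenerate case $k \le 0$ is handled separately: a holomorphic modular form of nonpositive weight on a discrete group of finite covolume with cusps is at most a constant, and the order hypothesis forces that constant to be $0$.

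For a nonzero holomorphic modular form $f$ of weight $k$ on a discrete group $\Gamma$ of finite covolume, the valence formula asserts
$$
\sum_{P \in X(\Gamma)} \frac{\text{ord}_P(f)}{n_P} \;=\; \frac{k}{4\pi}\,\text{vol}(\Gamma \backslash \hh),
$$
summed over $\Gamma$-inequivalence classes of points of $\hh^* = \hh\cup\PP^1(\QQ)$, where $n_P$ is the order of the isotropy subgroup in $\overline{\Gamma}$ (with the usual convention that cusps contribute the width-normalized vanishing order). For $\Gamma = \G_0(N)$ the hyperbolic covolume equals $\frac{\pi}{3}[\SL_2(\ZZ):\G_0(N)]$, so the right-hand side becomes exactly $\frac{k}{12}[\SL_2(\ZZ):\G_0(N)]$. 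For $\Gamma = \G_0(N)^+$ in the genus-zero regime relevant here (in particular $N=2,3$ in Section \ref{sec:exs1}, where $[\G_0(N)^+:\G_0(N)]=2$), the Atkin-Lehner extension halves the covolume, producing the bound with $24$ in the denominator.

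Since $f$ is holomorphic, every summand on the left of the valence formula is nonnegative, so retaining only the $i\infty$ term gives
$$
\text{ord}_{i\infty}(f) \;\le\; \frac{k}{12}[\SL_2(\ZZ):\G_0(N)]
$$
(respectively with $\tfrac{k}{24}$), directly contradicting the strict inequality in the hypothesis. Hence the assumption that $f$ is nonzero is untenable, and $f=0$.

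The main technical point to pin down is the covolume computation for $\G_0(N)^+$. One must verify that $\G_0(N)$ sits inside $\G_0(N)^+$ as an index-$2$ subgroup in each case at hand so that the fundamental domain of $\G_0(N)^+$ is exactly half that of $\G_0(N)$; this ensures the halving of the bound. Beyond this, the argument is completely standard, and the integer-coefficient hypothesis carried over from the formulation in \cite{Sturm} plays no active role in the deduction.
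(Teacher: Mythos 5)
The paper never proves this statement: it is imported directly from Sturm and Choi--Kim, so there is no internal proof to compare against. Your valence-formula argument is the standard proof underlying both cited results, and it is essentially correct: for $k\ge 1$ the order hypothesis together with ``poles only at $i\infty$'' forces $f$ to be holomorphic on the compactification, the valence formula for a Fuchsian group of the first kind bounds $\text{ord}_{i\infty}(f)$ above by $\frac{k}{4\pi}\mathrm{vol}(\G\backslash\hh)$, and the covolume identities $\mathrm{vol}(\G_0(N)\backslash\hh)=\frac{\pi}{3}[\SL_2(\ZZ):\G_0(N)]$ and $\mathrm{vol}(\G_0(N)^+\backslash\hh)=\mathrm{vol}(\G_0(N)\backslash\hh)/[\G_0(N)^+:\G_0(N)]$ produce exactly the constants in the statement; the integrality hypothesis is indeed inert here (it matters only for the mod-$p$ congruence version in Sturm's original paper). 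Two small points. First, for composite $N$ in the genus-zero list (e.g.\ $N=6$) one has $[\G_0(N)^+:\G_0(N)]=2^{\omega(N)}>2$, so the covolume is divided by more than $2$ and the bound with denominator $24$ holds a fortiori rather than by exact halving; your verification covers only the index-$2$ cases $N=2,3$, which are the ones the paper actually uses. Second, your disposal of $k<0$ is not quite right as written: for negative $k$ the order hypothesis permits a pole at $i\infty$, so $f$ need not be holomorphic and the ``nonpositive weight implies constant'' argument does not apply; the cleaner route is to apply the valence formula directly to the meromorphic $f$ (all local terms away from $i\infty$ are nonnegative, so $\text{ord}_{i\infty}(f)\le \frac{k}{4\pi}\mathrm{vol}(\G\backslash\hh)$ regardless of sign of $k$), which also makes the case split unnecessary. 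Neither issue affects any application in the paper, where $k$ is always positive and $N\in\{2,3\}$ for the $\G_0(N)^+$ cases.
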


\subsection{Some useful results} 

The following formula due to Clausen \cite{AAR} states that 
\begin{equation}\label{eq:Clausen}
\pFq21{a&b}{&a+b+\frac12}{z}^2=\pFq32{2a&2b&a+b}{&2a+2b&a+b+\frac 12}{z}.
\end{equation}

We next give two useful lemmas. The first we use in the proof of Theorem \ref{thm: general}.  

\begin{lemma}\label{lem:slashderiv}
Suppose $f(\tau)$ is a meromorphic function on $\hh$.  If there exists $A=\left(\begin{smallmatrix} a&b\\c&d\end{smallmatrix}\right)\in \SL_2(\RR)$, $k\in \NN$, and $\beta \in \CC$ such that for all $\tau\in \hh$,
\[
(f|_kA)(\tau)= \beta f(\tau),
\]
then for any $\tau\in \hh$ that is not a pole of $f$,
\[
\frac 1{\beta(c\tau+d)^2}\frac{df}{d\tau}(A\tau)=(c\tau+d)^k \frac{df}{d\tau}(\tau)+kc(c\tau+d)^{k-1}f(\tau).
\]
\end{lemma}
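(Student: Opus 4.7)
The plan is to unpack the slash operator definition and then differentiate directly. By definition of the weight-$k$ slash operator, the hypothesis $(f\vert_k A)(\tau) = \beta f(\tau)$ reads
\[
(c\tau+d)^{-k} f(A\tau) = \beta f(\tau),
\]
which I will rewrite in the more convenient form
\[
f(A\tau) = \beta (c\tau+d)^k f(\tau).
\]
Since both sides are meromorphic functions of $\tau$ on $\hh$, I can differentiate this identity with respect to $\tau$ at any point where $f$ has no pole.

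For the left-hand side, I apply the chain rule: $\tfrac{d}{d\tau} f(A\tau) = f'(A\tau)\cdot \tfrac{d}{d\tau}(A\tau)$. The crucial input is that because $A \in \SL_2(\RR)$ we have $ad - bc = 1$, and a direct computation gives
\[
\frac{d}{d\tau}\left(\frac{a\tau+b}{c\tau+d}\right) = \frac{a(c\tau+d) - c(a\tau+b)}{(c\tau+d)^2} = \frac{1}{(c\tau+d)^2}.
\]
So the left-hand side becomes $f'(A\tau)/(c\tau+d)^2$. For the right-hand side, the product rule yields
\[
\beta k c (c\tau+d)^{k-1} f(\tau) + \beta (c\tau+d)^k f'(\tau).
\]
Equating these and dividing through by $\beta$ gives exactly the claimed identity
\[
\frac{1}{\beta(c\tau+d)^2}\frac{df}{d\tau}(A\tau) = (c\tau+d)^k \frac{df}{d\tau}(\tau) + kc(c\tau+d)^{k-1} f(\tau).
\]

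There is no genuine obstacle here beyond careful bookkeeping: the whole argument is a one-line application of the chain and product rules together with the determinant-$1$ identity $ad-bc=1$. The only point worth flagging is the avoidance of poles of $f$, which is why the conclusion is stated for $\tau \in \hh$ that is not a pole of $f$ (and, implicitly, such that $A\tau$ is also not a pole, which follows automatically from the transformation law).
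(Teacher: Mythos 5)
Your proof is correct and is essentially the paper's argument: the paper also rewrites the hypothesis as $f(A\tau)=\beta(c\tau+d)^k f(\tau)$ and obtains the identity by differentiating both sides (your write-up simply makes the chain-rule computation with $\frac{d}{d\tau}(A\tau)=(c\tau+d)^{-2}$ and the product rule explicit). No gaps; the pole caveat you flag matches the lemma's statement.
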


\begin{proof}
By hypothesis,
\[
f(A\tau)=\beta (c\tau+d)^kf(\tau).
\]
The proof follows immediately from taking $\frac{d}{d\tau}$ of both sides of this equation.
\end{proof}

The next lemma is used in our construction of explicit examples in Section \ref{sec:exs1}.

\begin{lemma} \label{lemma:Ztransform} 
Let $Z(\tau)$ be a modular form for $\G$ of even weight $k$.  If $\gamma_s = \frac1{\sqrt s} \left(\begin{smallmatrix} 0 & -1 \\ s & 0 \end{smallmatrix}\right) \in \G$ for some real $s>0$, then for any real $r>0$, we have
\[
Z\left( \frac{i}{\sqrt{rs}}\right) = (-r)^{k/2} Z\left(i\sqrt{\frac rs} \right).
\]
\end{lemma}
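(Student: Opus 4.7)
The plan is to convert the modularity of $Z$ under $\gamma_s$ into a pointwise transformation formula, evaluate at a carefully chosen $\tau$, and simplify using that $k$ is even.

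First, I would verify that $\gamma_s=\frac{1}{\sqrt s}\left(\begin{smallmatrix}0&-1\\s&0\end{smallmatrix}\right)$ lies in $\SL_2(\RR)$ (its determinant is $1$) and compute its action on $\hh$: $\gamma_s\tau = -1/(s\tau)$, with automorphy factor $c\tau+d=\sqrt{s}\,\tau$. Since $Z$ is a weight-$k$ modular form for $\G$ and $\gamma_s\in\G$, the slash condition $(Z|_k\gamma_s)(\tau)=Z(\tau)$ unfolds to the identity
\[
Z\!\left(\frac{-1}{s\tau}\right)=s^{k/2}\tau^{k}Z(\tau)
\]
valid for every $\tau\in\hh$.

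Next I would specialize to $\tau=i\sqrt{r/s}$, which lies in $\hh$ since $r,s>0$. A direct computation gives $s\tau=i\sqrt{rs}$ and hence $-1/(s\tau)=i/\sqrt{rs}$, so the left-hand side becomes $Z\!\left(i/\sqrt{rs}\right)$. On the right-hand side, I would expand
\[
s^{k/2}\tau^{k}=s^{k/2}\bigl(i\sqrt{r/s}\bigr)^{k}=i^{k}r^{k/2}.
\]
Since $k$ is even, $i^{k}=(-1)^{k/2}$, so $i^{k}r^{k/2}=(-r)^{k/2}$, yielding the desired identity
\[
Z\!\left(\frac{i}{\sqrt{rs}}\right)=(-r)^{k/2}\,Z\!\left(i\sqrt{\frac{r}{s}}\right).
\]

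There is essentially no obstacle; the only subtlety worth flagging is that the factor $(-r)^{k/2}$ comes from combining $i^{k}$ with $r^{k/2}$, which requires $k$ to be even (hypothesized) so that $(-1)^{k/2}$ is unambiguous and no branch-of-square-root issues arise. The argument is just a clean substitution into the modular transformation law.
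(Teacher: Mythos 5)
Your proof is correct and follows essentially the same route as the paper: unfold the weight-$k$ modularity under $\gamma_s$ to get $Z(-1/(s\tau))=(\sqrt{s}\,\tau)^k Z(\tau)$ and substitute $\tau=i\sqrt{r/s}$, with your simplification $i^k r^{k/2}=(-r)^{k/2}$ just making explicit the step the paper leaves to the reader.
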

\qed 

\begin{proof}
By the modularity of $Z(\tau)$, for any $\tau \in \hh$,
\[ Z(-1/s\tau) = (\sqrt{s}\tau)^kZ(\tau). \]
Setting $\tau=i\sqrt{\frac rs}$, gives the desired result.
\end{proof}

\begin{remark}\label{rmk:exist}
We note that $\PSL_2(\ZZ) \cong (2,3,\infty)$ contains $\gamma_1$, $\G_0(2)^+ \cong (2,4,\infty)$ contains $\gamma_2$, and $\G_0(3)^+ \cong (2,6,\infty)$ contains $\gamma_3$, with $\gamma_s$ as described in Lemma \ref{lemma:Ztransform}.
\end{remark}

\section{Proof of Theorem \ref{thm: general}}  \label{sec:pf1}

For convenience we use the notation $f'(\tau):=\frac{1}{2\pi i}\frac{df}{d\tau}(\tau)$ throughout this section.  

\begin{proof}[Proof of Theorem \ref{thm: general}.]

By hypothesis, we have for $\tau\in\hh$ that
\begin{equation}\label{eq:hypF}
Z(\gamma \tau)=\alpha (c\tau-a)^k Z(\tau).
\end{equation}
Thus by Lemma \ref{lem:slashderiv}, 
\[
Z'(\gamma\tau) = \alpha(c\tau-a)^{k+2}Z'(\tau) + \frac{\alpha ck}{2\pi i}(c\tau-a)^{k+1}Z(\tau).
\]
Plugging in $\tau_0=\frac{a}{c}+ \frac{i}{c\sqrt N}$ yields
\[
\frac{ck\sqrt{N}}{2\pi}\cdot Z(\tau_0) = Z'(\tau_0) + \alpha^{-1}i^{-k}N^{\frac{k+2}{2}}Z'(\gamma\tau_0).
\]
Dividing by $Z(\tau_0)$ and using \eqref{eq:hypF}, we obtain
\begin{equation}\label{eq:Z'tau0}
\frac{ck\sqrt{N}}{2\pi} = \frac{Z'(\tau_0)}{Z(\tau_0)} + N\frac{Z'(\gamma \tau_0)}{Z(\gamma \tau_0)}.
\end{equation}

On the other hand, changing the variable $\tau \mapsto N\tau$, we also have by hypothesis that 
\[
Z(\delta \cdot N \tau)=\beta Z(N\tau),
\]
so applying Lemma \ref{lem:slashderiv} with $c=0$ yields that 
\[
Z'\left(\delta \cdot N\tau \right) = \beta Z'\left(N\tau\right).
\]
Therefore, given $M_N(\tau)=\frac{Z(\tau)}{Z(N\tau)}$, we obtain

\begin{equation}\label{eq:M'}
\frac{M_N'(\tau)}{M_N(\tau)}=\frac{Z'(\tau)}{Z(\tau)}-N\frac{Z'(N\tau)}{Z(N\tau)}=\frac{Z'(\tau)}{Z(\tau)}-N\frac{Z'\left(\delta \cdot N\tau\right)}{Z\left(\delta \cdot N \tau\right)}.
\end{equation}

Note that $\gamma\cdot \tau_0 = \delta\cdot N \tau_0$ so from  \eqref{eq:hypF} we obtain
$$\beta Z(N\tau_0) = Z(\delta \cdot N\tau_0)=Z(\gamma \cdot \tau_0)=\alpha \frac{i^k}{N^{k/2}}Z(\tau_0),$$
and therefore $M_N(\tau_0)=\frac{\beta N^{k/2}}{\alpha i^k}$.  So \eqref{eq:M'} at $\tau_0$ becomes
\begin{equation}\label{eq:M'tau0}
\frac{\alpha i^k}{\beta N^{k/2}}M_N'(\tau_0)=\frac{Z'(\tau_0)}{Z(\tau_0)}-N\frac{Z'(\gamma \tau_0)}{Z(\gamma \tau_0)}.
\end{equation}

Using \eqref{eq:M'tau0} we can rewrite \eqref{eq:Z'tau0} as both
\begin{align}
\frac{ck\sqrt{N}}{2\pi} &= 2\frac{Z'(\tau_0)}{Z(\tau_0)} - \frac{\alpha i^k}{\beta N^{k/2}}M_N'(\tau_0), \label{eq:tau0} \\
\frac{ck\sqrt{N}}{2\pi} &= 2N\frac{Z'(\gamma \tau_0)}{Z(\gamma \tau_0)} + \frac{\alpha i^k}{\beta N^{k/2}}M_N'(\tau_0). \label{eq:gammatau0}
\end{align}

From our hypotheses, we have that $X' (\tau)= U(\tau)X(\tau)Z(\tau)$ for $\tau\in \hh$ and $Z(\tau)=X(\tau)^{\varepsilon_0}(1-X(\tau))^{\varepsilon_1}\sum_{j \ge 0} A_j X(\tau)^j$ for $\tau\in D$.  Using the fact that 
\[
\frac{X(\tau)}{X'(\tau)} \cdot \frac{1}{2\pi i}\frac{d}{d\tau} X(\tau)^{\varepsilon_0}(1-X(\tau))^{\varepsilon_1} = X(\tau)^{\varepsilon_0}(1-X(\tau))^{\varepsilon_1}\left(\varepsilon_0 + \varepsilon_1\frac{X(\tau)}{X(\tau)-1}\right),
\]
it follows that for $\tau \in D$,
\begin{equation}\label{eq:Z'/Z}
\frac{Z'(\tau)}{Z(\tau)}=U(\tau)X(\tau)^{\varepsilon_0}(1-X(\tau))^{\varepsilon_1}\sum_{j\geq 0}\left(j + \varepsilon_0 + \varepsilon_1\frac{X(\tau)}{X(\tau)-1}\right)A_j X(\tau)^j.
\end{equation}
On the other hand for $\tau\in D$, $M_N(\tau)$ can be expressed as a function of $X(\tau)$ and $X(N\tau)$, so 
\begin{equation}\label{eq:M_N'gen}
\frac{1}{2\pi i}\frac{dM_N}{d\tau}(\tau)=\frac{dM_N}{dX}(\tau) \cdot U(\tau)X(\tau)Z(\tau).
\end{equation}
Thus for $\tau\in D$,
\begin{equation}\label{eq:M_N'}
\frac{1}{2\pi i}\frac{dM_N}{d\tau}(\tau) = U(\tau) X(\tau)^{\varepsilon_0+1} (1-X(\tau))^{\varepsilon_1} \frac{dM_N}{dX}(\tau) \sum_{j \ge 0} A_j X(\tau)^j.
\end{equation}

Hence when $\tau_0\in D$ we can rewrite \eqref{eq:tau0} using \eqref{eq:Z'/Z} and \eqref{eq:M_N'} to obtain 
\begin{multline*}
\frac{ck\sqrt N}{2\pi} = 2U(\tau_0)X(\tau_0)^{\varepsilon_0}(1-X(\tau_0))^{\varepsilon_1}\sum_{j\geq 0}\left(j + \varepsilon_0 + \varepsilon_1\frac{X(\tau_0)}{X(\tau_0)-1}\right)A_j X(\tau_0)^j \\
- \frac{\alpha i^k}{\beta N^{k/2}} U(\tau_0)X(\tau_0)^{\varepsilon_0+1} (1-X(\tau_0))^{\varepsilon_1} \left(\frac{dM_N}{dX}\right)\mid_{X=X(\tau_0)}\sum_{j\geq 0}  A_j X(\tau_0)^j, 
\end{multline*}
which yields our first identity. Similarly when $\gamma\tau_0\in D$ we rewrite \eqref{eq:gammatau0} using \eqref{eq:Z'/Z} and \eqref{eq:M_N'gen} to obtain
\begin{multline*}
\frac{ck\sqrt N}{2\pi} = 2NU(\gamma\tau_0)X(\gamma\tau_0)^{\varepsilon_0}(1-X(\gamma\tau_0))^{\varepsilon_1}\sum_{j\geq 0} \left(j + \varepsilon_0 + \varepsilon_1\frac{X(\gamma\tau_0)}{X(\gamma\tau_0)-1}\right) A_j X(\gamma\tau_0)^j \\
+ \frac{\alpha i^k}{\beta N^{k/2}}\left(\frac{dM_N}{dX}\right)\mid_{X=X(\tau_0)} \cdot U(\tau_0)X(\tau_0)Z(\tau_0).
\end{multline*}
Since by \eqref{eq:hypF} we have $Z(\tau_0)= \frac{N^{k/2}}{\alpha i^k}Z(\gamma \tau_0)$, this becomes
\begin{multline*}
\frac{ck\sqrt N}{2\pi} = 2NU(\gamma\tau_0)X(\gamma\tau_0)^{\varepsilon_0}(1-X(\gamma\tau_0))^{\varepsilon_1}\sum_{j\geq 0} \left(j + \varepsilon_0 + \varepsilon_1\frac{X(\gamma\tau_0)}{X(\gamma\tau_0)-1}\right) A_j X(\gamma\tau_0)^j \\ 
+ \frac{1}{\beta} U(\tau_0)X(\tau_0) \left(\frac{dM_N}{dX}\right)\mid_{X=X(\tau_0)} X(\gamma\tau_0)^{\varepsilon_0}(1-X(\gamma\tau_0))^{\varepsilon_1} \sum_{j\geq 0}  A_j X(\gamma \tau_0)^j,
\end{multline*}
which yields our second identity.
\end{proof}

We conclude this section with a lemma that will be useful when we compute examples in the following sections.

\begin{lemma} \label{lem:M_N}
Suppose $X(\tau), Z(\tau), U(\tau)$ satisfy the conditions in Theorem \ref{thm: general}.  Then, writing $X:=X(\tau)$ and $Y:=X(N\tau)$ gives
\[
M_N = N \frac{dX}{dY}\cdot \frac{Y}{X} \cdot \frac{U(N\tau)}{U(\tau)}.
\]
\end{lemma}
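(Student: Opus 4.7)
The plan is to solve the differential equation hypothesis of Theorem \ref{thm: general} directly for $Z(\tau)$, carry out the analogous computation for $Z(N\tau)$ via the chain rule, and then take the ratio.

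First I would rearrange the hypothesis $\frac{1}{2\pi i}\frac{d}{d\tau}X(\tau) = U(\tau)X(\tau)Z(\tau)$ to obtain
\[
Z(\tau) = \frac{1}{U(\tau)X(\tau)}\cdot \frac{1}{2\pi i}\frac{dX(\tau)}{d\tau}.
\]
Next I would apply the same identity with $\tau$ replaced by $N\tau$ and combine it with the chain rule. Writing $Y = X(N\tau)$, the chain rule gives
\[
\frac{1}{2\pi i}\frac{dY}{d\tau} = N\cdot \frac{1}{2\pi i}\frac{dX}{d\tau}\bigg|_{N\tau} = N\, U(N\tau)\, Y\, Z(N\tau),
\]
and hence
\[
Z(N\tau) = \frac{1}{N\, U(N\tau)\, Y}\cdot \frac{1}{2\pi i}\frac{dY}{d\tau}.
\]

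The final step is to form the quotient $M_N(\tau)=Z(\tau)/Z(N\tau)$. The factors of $\frac{1}{2\pi i}$ cancel, leaving
\[
M_N = \frac{dX/d\tau}{dY/d\tau}\cdot \frac{N\, U(N\tau)\, Y}{U(\tau)\, X} = N\,\frac{dX}{dY}\cdot \frac{Y}{X}\cdot \frac{U(N\tau)}{U(\tau)},
\]
where in the last step I use the chain rule identity $\frac{dX/d\tau}{dY/d\tau} = \frac{dX}{dY}$ (valid wherever $dY/d\tau$ is nonzero, which holds generically on $\hh$).

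There is no real obstacle here; the proof is a one-line manipulation once one recognizes that the hypothesis gives an explicit formula for $Z$ in terms of $X$ and $U$. The only thing to be mildly careful about is the chain-rule bookkeeping — in particular the extra factor of $N$ coming from $\frac{d}{d\tau}X(N\tau)$ — and the tacit assumption that we are working on a subset of $\hh$ where $X$, $Y$, $U(\tau)$, $U(N\tau)$, and $dY/d\tau$ are all nonzero, so that the divisions are legitimate.
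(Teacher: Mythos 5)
Your proposal is correct and is essentially the paper's own argument rearranged: the paper computes $\frac{dY}{dX}$ via the chain rule, substitutes $\frac{1}{2\pi i}\frac{dX}{d\tau}=UXZ$ at $\tau$ and at $N\tau$, and then solves for $M_N$, which is the same manipulation you perform by solving for $Z(\tau)$ and $Z(N\tau)$ and taking the quotient directly. No further comment is needed.
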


\begin{proof}
Since $M_N$ is a function of $X$ and $Y$,
\begin{multline*}
\frac{dY}{dX}=\frac{dY}{d\tau}\cdot \frac{d\tau}{dX} =N\frac{dX}{d\tau}(N\tau)\cdot \frac{d\tau}{dX}
= N \frac{\frac{dX}{d\tau}(N\tau)}{\frac{dX}{d\tau}}
= N\frac{YU(N\tau)Z(N\tau)}{XU(\tau)Z(\tau)} = \frac{N}{M_N}\frac{Y}{X}\frac{U(N\tau)}{U(\tau)},
\end{multline*}
which gives the result.
\end{proof}

\section{Examples of Theorem \ref{thm: general} of first type} \label{sec:exs2}
In this section we obtain six different Ramanujan-Sato series for $1/ \pi$ as examples of Theorem \ref{thm: general}. One is already available in the literature and the others are new according to our knowledge.

To construct the examples in this section we use the following corollary for modular forms on certain groups where the cusp $i\infty$ has width $1$, which follows immediately from Theorem \ref{thm: general} with $h=1$ and $\beta=1$. 

\begin{corollary} \label{cor:1.3}
Let $\Gamma$ be a discrete subgroup of $\SL_2(\RR)$ commensurable with $\SL_2(\ZZ)$ such that $\left(\begin{smallmatrix}1&1\\0&1\end{smallmatrix}\right) \in \Gamma$ (i.e. the cusp $i\infty$ of $\G$ has width $1$), and let $X(\tau)$ be a Hauptmodul of $\Gamma$.  Let $Z(\tau)$ be a weight-$k$ modular form for $\Gamma$ such that $\frac{1}{2\pi i}\frac{dX}{d\tau} = U(\tau)X(\tau)Z(\tau)$ and when $\tau\in D$, a domain of $\hh$, $Z(\tau)= X(\tau)^{\varepsilon_0}(1-X(\tau))^{\varepsilon_1} \sum\limits_{j=0}^\infty A_jX(\tau)^j$ for $\varepsilon_0,\varepsilon_1\in \RR$, $A_j\in \CC$.  Further assume there exist $\alpha \in \CC$ and $\gamma = \left(\begin{smallmatrix}a&b\\c&-a\end{smallmatrix}\right) \in \SL_2(\RR)$ such that 
\[
\left( Z \vert_k  \gamma \right) (\tau) = \alpha Z (\tau).
\]
Set $M_N(\tau):=Z(\tau)/Z(N\tau)$ for $N\in \NN$ satisfying $\frac ac(1-N)\in \ZZ$, and let $\tau_0=\frac{a}{c}+ \frac{i}{c\sqrt N}$. Then if $\gamma\tau_0 =\frac ac + \frac{i\sqrt N}{c} \in D$, we have
\[
\frac{ck\sqrt N}{2\pi}=X(\gamma\tau_0)^{\varepsilon_0}(1-X(\gamma\tau_0))^{\varepsilon_1}\sum_{j\geq 0} (b_Nj+a_N) A_j X(\gamma\tau_0)^j,
\]
where
\begin{align*}
b_N &= 2NU(\gamma\tau_0), &  \\
a_N &= 2N U(\gamma\tau_0)\left(\varepsilon_0+\varepsilon_1\frac{X(\gamma\tau_0)}{X(\gamma\tau_0)-1}\right) + U(\tau_0)X(\tau_0) \left(\frac{dM_N}{dX}\right) \left|_{X=X(\tau_0)}. \right.
\end{align*}
\end{corollary}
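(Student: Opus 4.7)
The plan is to derive Corollary \ref{cor:1.3} as a direct specialization of Theorem \ref{thm: general}. Concretely, the statement of the corollary is exactly the ``alternative'' conclusion of Theorem \ref{thm: general} (the one valid when $\gamma\tau_0 \in D$) in the particular case $\beta = 1$. So the proof reduces to two steps: (i) verify that under the hypotheses of the corollary, the full hypotheses of Theorem \ref{thm: general} hold with $\beta=1$, and (ii) transcribe the alternative identity.

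The key observation for step (i) is that the width-one cusp hypothesis $\smat{1}{1}{0}{1} \in \Gamma$, combined with the integrality condition $\frac{a}{c}(1-N) \in \ZZ$, forces the auxiliary matrix of Theorem \ref{thm: general} to lie in $\Gamma$. Indeed,
$$
\delta = \smat{1}{\frac{a}{c}(1-N)}{0}{1} = \smat{1}{1}{0}{1}^{\frac{a}{c}(1-N)}
$$
is an integer power of the translation generator, so $\delta \in \Gamma$. Since $Z$ is a modular form of weight $k$ for $\Gamma$, we obtain $(Z|_k \delta)(\tau) = Z(\tau)$, i.e.\ $\beta = 1$. All remaining hypotheses of Theorem \ref{thm: general} are either stated directly in the corollary (the differential relation $\frac{1}{2\pi i}X' = UXZ$, the local expansion of $Z$ as a power series in $X$ on $D$, the existence of $\alpha$ and $\gamma$) or follow at once from the setup; for instance $\gamma\tau_0 = \frac{a}{c} + \frac{i\sqrt N}{c}$ is immediate from the explicit form of $\gamma$ together with $\det \gamma = 1$.

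For step (ii), I would simply invoke the second conclusion of Theorem \ref{thm: general}. With $\beta=1$ substituted into its formulas, $b_N'$ and $a_N'$ become exactly the $b_N$ and $a_N$ of the corollary, since the factor $\frac{1}{\beta}$ appearing in $a_N'$ disappears. This completes the derivation.

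No real obstacle is anticipated, as the corollary is essentially a repackaging of Theorem \ref{thm: general}. The only delicate point is identifying the width-one cusp condition and the integrality assumption $\frac{a}{c}(1-N) \in \ZZ$ as precisely the combinatorial hypotheses needed to guarantee $\delta \in \Gamma$ and therefore $\beta = 1$; once this is noted the conclusion is immediate.
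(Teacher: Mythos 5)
Your proposal is correct and matches the paper's route exactly: the paper derives Corollary \ref{cor:1.3} as an immediate specialization of Theorem \ref{thm: general} with $\beta=1$, which is justified precisely as you say, since $\frac{a}{c}(1-N)\in\ZZ$ and $\left(\begin{smallmatrix}1&1\\0&1\end{smallmatrix}\right)\in\Gamma$ force $\delta\in\Gamma$ and hence $(Z\vert_k\delta)=Z$. Nothing further is needed.
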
  

Note that the arithmetic triangle groups $(m,\infty,\infty) \cong \G_0(m)$ for $m\in\{2,3\}$ and $(\infty,\infty,\infty) \cong \G_0(4)$ contain the element $\left(\begin{smallmatrix}1&1\\0&1\end{smallmatrix}\right)$. In this section we construct examples of Corollary~\ref{cor:1.3} for these groups. We choose the Hauptmodul $t_m$ for $\Gamma_0(m)$ with $m\in\{2,3\}$ and $t_\infty$ for $\Gamma_0(4)$ given in Table~\ref{tab:Hauptmoduln}.  Furthermore, from Table~\ref{tab:gens} we have that the space of modular forms of weight $2$ for $\Gamma_0(m)$ for $m=2,3,4$ is generated by $Z_2$, $Z_3$ and $Z_{\infty}$, respectively; these are given by
\begin{equation}\label{Zfort2}
    Z_m(\tau)=(1-t_m)^{1-\frac 1m}\pFq21{ \frac12- \frac1{2m} &  \frac12- \frac1{2m} }{&1}{t_m(\tau)}^2,
\end{equation}
and 
\begin{equation}\label{Zfort2inf}
    Z_\infty(\tau)=\pFq21{ \frac12&  \frac12}{&1}{t_\infty(\tau)}^2.
\end{equation}
Write $f':= \frac{1}{2\pi i}\frac{df}{d\tau}$. 
We next compute each $U_m$ for $t_m$ with $m\in\{2,3\}$ so that 
\begin{equation}
t_m' = t_mZ_mU_m.
\end{equation}
From Table~\ref{tab:Hauptmoduln}, the Hauptmodul $t_{m}$ for $m \in \{ 2, 3\}$ can be written as
\begin{equation}
    \label{eqn: tmalpha}
    t_{m}(\tau)=-\alpha\frac{\eta\left(m\tau\right)^{k_m}}{\eta(\tau)^{k_m}},
\end{equation} 
where 
$\alpha = m^{\frac{k_m}{4}}$ and  $k_m=\frac{24}{m-1}$.
Using the property $\eta'/\eta=\frac{1}{24} E_2$, we  get that
\begin{align*}
    \frac{t_m'}{t_m}(\tau)= k_m \left( m\frac{\eta'(m\tau)}{\eta(m\tau)}-\frac{\eta'(\tau)}{\eta(\tau)}\right)= \left( \frac{-1}{m-1}\right) E_{2, m}.
\end{align*}
Moreover, since $E_{2,m}$ is a modular form of weight $2$ for $\Gamma_0(m)$ by (\ref{eqn: E2N}), and $Z_m$ generates the space of weight $2$ modular forms for $\Gamma_0(m)$ as seen in Table \ref{tab:gens}, Theorem \ref{thm:Sturm} gives $Z_m(\tau)=\left( \frac{1}{1-m}\right) E_{2, m}$.  Therefore for $m=2,3$,
\begin{equation}\label{U1fort2t3}
    t_{m}' =t_{m}Z_m, \mbox{ and } U_m =1. 
\end{equation}
In order to apply Corollary \ref{cor:1.3} in these cases we need $\frac{dM_N}{dX}$ where $X(\tau)=t_m(\tau)$ and $Y(\tau)=t_m(N\tau)$.  Observe from Lemma \ref{lem:M_N} that when $U=1$, we have
\[
M_N=N\frac{dX}{dY}\cdot\frac{Y}{X}.
\]
Thus taking the derivative with respect to $X$ yields
\begin{equation}\label{eq:dM_NU=1}
\frac{dM_N}{dX} = N\left(\frac{d\left(\frac{dX}{dY}\right)}{dX}\cdot \frac{Y}{X} + \frac{1}{X} - \frac{dX}{dY}\cdot\frac{Y}{X^2} \right),
\end{equation}   
which will be useful in subsequent subsections.
 
The computation of $U$ and $\frac{dM_N}{dX}$ for the group $\Gamma_0(4)$ is given in \S\ref{sec: G0(4)} where we obtain an example for this group. 
\subsection{\texorpdfstring{$\Gamma_0(2)\cong (2,\infty,\infty)$}{}} 
As in Table~\ref{tab:Hauptmoduln} we choose the Hauptmodul $X(\tau)=t_2(\tau)=-64\frac{\eta(2\tau)^{24}}{\eta(\tau)^{24}}$, and for the corresponding domain $D$ we choose the intersection of $\{\tau\in \hh:\, |X(\tau)|<1\}$ and the following fundamental domain $FD$ of $\G_0(2)$:
$$
 FD=\{\tau\in \hh:\, |\mbox{Re}(\tau)|\leq 1/2,\, |\tau-1/2|>1/2, \, |\tau+1/2|>1/2\}.
$$
Recall that $Z=Z_2$ from \eqref{Zfort2}.  It follows from (\ref{eq:Clausen}) that
\begin{equation}\label{Zfort21}
    Z(\tau)=(1-X(\tau))^{\frac 12}\pFq21{ \frac14 & \frac14}{&1}{X(\tau)}^2=(1-X(\tau))^{\frac 12}\pFq32{ \frac12 & \frac 12 &\frac 12}{&1&1}{X(\tau)}.
\end{equation}
Thus,
\begin{equation}\label{Zfort22}
Z(\tau) = (1-X(\tau))^{\frac 12}\sum\limits_{j=0}^\infty \frac{\left(\frac12\right)_j\left(\frac12\right)_j\left(\frac12\right)_j}{(j!)^3} X^j(\tau).
\end{equation}
For Examples~\ref{sect4ex1} and \ref{sect4ex2} below, we use $\gamma=w_2=\frac1{\sqrt 2}\left(\begin{smallmatrix} 0 & -1 \\ 2 & 0 \end{smallmatrix}\right)$. To find the transformation property of $Z$ with respect to $w_2$, we first note that Theorem~\ref{thm:Sturm} implies
$$Z(\tau)=\frac{1}{2}(\theta_3(\tau)^4+\theta_4(\tau)^4).$$
Also, using the transformation formula $\eta(-1/\tau)=\sqrt{-i\tau}\eta(\tau)$ \cite[p.20]{DiamondShurman} and (\ref{eqn: thetaeta}), we have 
\begin{align*}
    \theta_3^4\left(\frac{-1}{2\tau}\right)&=-4\tau^2\theta_3^4(2\tau),\\
      \theta_4^4\left(\frac{-1}{2\tau}\right)&=-4\tau^2\cdot 16\frac{\eta^8(4\tau)}{\eta^4(2\tau)}=-4\tau^2 \theta_2^4(2\tau).
\end{align*}
Using the last two equalities in (\ref{eqn: thetaE2}), we thus obtain the following transformation property
\begin{equation}\label{Ztransw2}
    (Z|w_2)(\tau)=-Z(\tau), \mbox{ i.e., } Z(-1/2\tau)=-2\tau^2 Z(\tau).
\end{equation}

We see that $Z$ and $X$ above satisfy the conditions in Corollary~\ref{cor:1.3} with $U=1$, $\gamma=w_2$, $k=2$, $\varepsilon_0=0$, $\varepsilon_1=\frac 12$, $\alpha=-1$, and any $N\in \NN$.

For Examples~\ref{sect4ex3} and \ref{sect4ex4} below, we consider $\gamma=\smat{1}{-1}{2}{-1}$. Since $Z=Z_2$ is a weight $2$ modular form on $\Gamma_0(2)$, we have the following transformation property
\begin{equation}\label{Ztrans2}
(Z|\gamma)(\tau)=Z(\tau), \mbox{ i.e., } Z\left(\frac{\tau-1}{2\tau-1}\right)=(2\tau-1)^2Z(\tau). 
\end{equation}
We see that $Z$ and $X$ above satisfy conditions in Corollary~\ref{cor:1.3} with $U=1$, $\gamma=\smat{1}{-1}{2}{-1}$, $k=2$, $\varepsilon_0=0$, $\varepsilon_1=\frac 12$, and $\alpha=1$. Note that, for this choice of $\gamma$, we need to choose $N$ such that $\frac{1-N}{2}\in \ZZ$. 
 
We now proceed with examples for the group $\Gamma_0(2)\cong(2,\infty,\infty)$.

\begin{example}\label{sect4ex1}
Let $N=3$, $\gamma=w_2$ above, and $\tau_0=i/\sqrt 6$. Using the values of $\eta(i/\sqrt{6})$, $\eta(i\sqrt{6})$, $\eta(i\sqrt{ 2/3})$ and $\eta(i\sqrt{3/2})$ from Table~\ref{tab:etavalues} we get
 \begin{equation*}
     \begin{split}
        X(i/\sqrt{6})&=-17-12\sqrt{2},\\
         Y(i/\sqrt{6})=X(i\sqrt{3/2})&=-17+12\sqrt2.
    \end{split}
\end{equation*}
Using \eqref{eq:dM_NU=1} and the polynomial relationship $\Phi_3(X,Y)=0$ between $X(\tau)=t_2(\tau)$ and $Y(\tau)=t_2(\gamma\tau)$ given in Lemma~\ref{modulart2N3}, we obtain
\begin{equation*}
\left(\frac{dM_3}{dX}\right)\left|_{X=X(i/\sqrt{6})}\right.=12-\frac{17}{\sqrt{2}}.
\end{equation*}
From Corollary~\ref{cor:1.3} and \eqref{Zfort22} we then have
\[
\frac{\sqrt {6}}{\pi} = (1-X(i\sqrt{3/2}))^{1/2} \sum_{j\geq 0} (6j+a_3) \frac{\left(\frac12\right)_j\left(\frac12\right)_j\left(\frac12\right)_j}{(j!)^3} X(i\sqrt{3/2})^j,
\]
where
\begin{align*}
a_3&= 3\left(\frac{X\left(i\sqrt{\frac{3}{2}}\right)}{X\left(i\sqrt{\frac{3}{2}}\right)-1}\right) 
+X(i/\sqrt{6}) \left(\frac{dM_3}{dX}\right)\left|_{X=X(i/\sqrt{6})} \right. =\frac 32 - \frac 1{\sqrt{2}}.
\end{align*}
Finally we have 
\[
\frac{2}{\pi} = (\sqrt 2-1) \sum_{j\geq 0} \left(12j+3 -\sqrt{2}\right) \frac{\left(\frac12\right)_j\left(\frac12\right)_j\left(\frac12\right)_j}{(j!)^3} \left(12\sqrt2-17\right)^j.
\]
\end{example}
\begin{example}\label{sect4ex2}
Let $N=5$, $\gamma=w_2$, and $\tau_0=i/\sqrt{10}$. Using the values of $j(i/\sqrt{10})$ and $j\left(i\sqrt{\frac 52}\right)$ from Table~\ref{tab:jvalues} and 
 the relationship $j=\frac{64(4X-1)^3}{X}$ from  (\ref{eqn: jt2reln}), we get the values
  \begin{equation*}
     \begin{split}
        X(i/\sqrt{10})&=-161-72\sqrt{5},\\
         X\left(i\sqrt{\frac{5}{2}}\right)&=-161+72\sqrt{5}.
    \end{split}
\end{equation*}
Using the polynomial relationship $\Phi_5(X,Y)=0$ between $X(\tau)=t_2(\tau)$ and $Y(\tau)=t_2(\gamma\tau)$ from Lemma~\ref{modulart2N3}, we obtain
\begin{equation*}
   \frac{dM_5}{dX}|_{X=X(i/\sqrt{10})}=\frac{1440-644\sqrt{5}}{9}. 
\end{equation*}
From Corollary~\ref{cor:1.3} and \eqref{Zfort22} we then have
\[
\frac{\sqrt {10}}{\pi} = \left(1-X\left(i\sqrt{\frac{5}{2}}\right)\right)^{1/2} \sum_{j\geq 0} (10j+a_5) \frac{\left(\frac12\right)_j\left(\frac12\right)_j\left(\frac12\right)_j}{(j!)^3} X\left(i\sqrt{\frac{5}{2}}\right)^j,
\]
where
\begin{align*}
a_5&= 5\left(\frac{X\left(i\sqrt{\frac{5}{2}}\right)}{X\left(i\sqrt{\frac{5}{2}}\right)-1}\right) 
+X(i/\sqrt{10}) \left(\frac{dM_5}{dX}\right)\left|_{X=X(i/\sqrt{10})} \right. =\frac{5}{2}-\frac{2 \sqrt{5}}{3}.
\end{align*}
So, finally we have 
\[
\frac{2\sqrt {5}}{\pi} = (\sqrt 5-2) \sum_{j\geq 0} \left(60j+15-4 \sqrt{5}\right) \frac{\left(\frac12\right)_j\left(\frac12\right)_j\left(\frac12\right)_j}{(j!)^3} \left(72\sqrt{5}-161\right)^j.
\]
\end{example}
\begin{example}\label{sect4ex3}
Let $N=3$, $\gamma=\smat{1}{-1}{2}{-1}$, and  $\tau_0=\frac{1+i\sqrt {3}}{2}$. Using the values $\eta\left(\frac{-1+i\sqrt{3}}{2}\right)$, $\eta(i\sqrt{3})$ from Table~\ref{tab:etavalues}, and the transformation formula  $\eta(\gamma\tau)^{24}=(c\tau+d)^{12}\eta(\tau)^{24}$ \cite[p. 20]{DiamondShurman}, we have $$X(\tau_0)=X(\gamma\tau_0)=\frac 14.$$ 
Using the polynomial relationship $\Phi_3(X,Y)=0$ between $X(\tau)=t_2(\tau)$ and $Y(\tau)=t_2(\gamma\tau)$ from Lemma~\ref{modulart2N3} and \eqref{eq:dM_NU=1} we obtain
\begin{equation}
\left(\frac{dM_3}{dX}\right)\left|_{X=X(\tau_0)} \right.=8.
\end{equation}
Corollary~\ref{cor:1.3} and \eqref{Zfort22} yield the series
\begin{equation*}
\frac{2\sqrt 3}{\pi} =(1-X(\gamma\tau_0))^{\frac 12} \sum\limits_{j\geq0} (6j +  a_3) \frac{\left(\frac12\right)_j\left(\frac12\right)_j\left(\frac12\right)_j}{(j!)^3} X(\gamma\tau_0)^j,
\end{equation*}
where
\begin{align*}
a_3&= 3\left(\frac{X(\gamma\tau_0)}{X(\gamma\tau_0)-1}\right) 
+X(\tau_0) \left(\frac{dM_3}{dX}\right)\left|_{X=X(\tau_0)} \right.=1. 
\end{align*}
Hence, we obtain
\begin{align*}
\frac{4}{\pi} &= \sum\limits_{j=0}^\infty (1 + 6j)\frac{\left(\frac12\right)_j\left(\frac12\right)_j\left(\frac12\right)_j}{(j!)^3} (1/4)^j.
\end{align*}
This series is one of the well-known Ramanujan series in \cite{Ramanujan} for $1/\pi$, which also arises from Chan, Chan, and Liu \cite[(1.1)]{CCL}).
\end{example}

\begin{example}\label{sect4ex4}
Let $N=5$, $\gamma=\smat{1}{-1}{2}{-1}$, and  $\tau_0=\frac{1+i\sqrt {5}}{2}$. Then, by using the values of $j(\tau_0)$, $j(\gamma\tau_0)$ from Table~\ref{tab:jvalues} and the relationship $j=\frac{64(4X-1)^3}{X}$ from \eqref{eqn: jt2reln}, we get $$X(\tau_0)=X(\gamma\tau_0)=9-4\sqrt{5}.$$ 
Next, using the polynomial relationship $\Phi_5(X,Y)=0$ between $X(\tau)=t_2(\tau)$ and $Y(\tau)=t_2(\gamma\tau)$ from Lemma~\ref{modulart2N3}, we obtain
\begin{equation*}
\left(\frac{dM_5}{dX}\right)\left|_{X=X(\tau_0)} \right.=15+ \frac{27\sqrt{5}}{4}.
\end{equation*}
Then, from Corollary~\ref{cor:1.3} and \eqref{Zfort22}, the series is of the form
\begin{equation*}
\frac{2\sqrt 5}{\pi} =(1-X(\gamma\tau_0))^{\frac 12} \sum\limits_{j\geq0} (10j + a_5) \frac{\left(\frac12\right)_j\left(\frac12\right)_j\left(\frac12\right)_j}{(j!)^3} X(\gamma\tau_0)^j,
\end{equation*}
where
    \begin{align*}
        a_5 &= 5\left(\frac{X(\gamma\tau_0)}{X(\gamma\tau_0)-1}\right) + X(\tau_0) \left(\frac{dM_5}{dX}\right)\left|_{X=X(\tau_0)} \right. =\frac{5 - \sqrt{5}}{2}.
 \end{align*}
So, we finally obtain 
\begin{equation*}
\frac{2\sqrt{5}}{\pi} =(\sqrt{5}-2)^{1/2} \sum\limits_{j\geq0} \left(20j +5 - \sqrt{5}\right) \frac{\left(\frac12\right)_j\left(\frac12\right)_j\left(\frac12\right)_j}{(j!)^3} (9-4\sqrt{5})^j.
\end{equation*}
\end{example}

\subsection{\texorpdfstring{$\G_0(3) \cong (3,\infty,\infty)$}{}} 
As in Table~\ref{tab:Hauptmoduln} we choose the Hauptmodul $$X(\tau)=t_3(\tau)=-27 \frac{\eta(3\tau)^{12}}{\eta(\tau)^{12}},$$
and for the corresponding domain $D$ we choose the intersection of $\{\tau\in \hh:\, |X(\tau)|<1\}$ and the following fundamental domain $FD$ of $\G_0(3)$
$$
 FD=\{\tau\in \hh:\, |\mbox{Re}(\tau)|\leq 1/2,\, |\tau-1/3|>1/3, \, |\tau+1/3|>1/3\}.
$$
Recall $Z=Z_3$ as given in \eqref{Zfort2}. From the hypergeometric product formula \cite[Theorem~2.3]{Kaiblinger}, we get that 

\begin{equation*}
  Z(\tau) = \sum\limits_{j=0}^\infty A_j X(\tau)^j,
\end{equation*}
where 
\begin{equation}\label{Zfort3}
    A_j = \frac{\left(\frac{1}{3}\right)_j^2}{j!^2} \sum\limits_{k=0}^j \frac{\left(-j\right)_k^2 \left(\frac{1}{3}\right)_k^2}{k!^2 \left(\frac{2}{3}-j\right)_k^2}.
\end{equation}
Recalling \eqref{U1fort2t3}, we see that $Z$ and $X$ above satisfy the conditions in Corollary~\ref{cor:1.3} with $U=1$, $k=2$, $\varepsilon_0=0$, $\varepsilon_1=\frac 23$, and $\alpha=1$. 
\begin{example}\label{sect4ex5}
Consider $N=2$, $\gamma=w_3=\frac1{\sqrt 3}\left(\begin{smallmatrix} 0 & -1 \\ 3 & 0 \end{smallmatrix}\right)$, and $\tau_0=\frac{i}{\sqrt 6}$. Using the values $\eta\left(i\sqrt{3/2} \right)$, $\eta\left(i/\sqrt{6} \right)$, $\eta\left(i\sqrt{6} \right)$ and $\eta\left(i\sqrt{2/3} \right)$ from Table~\ref{tab:etavalues}, we have 
  \begin{equation*}
     \begin{split}
        t_3\left(\frac{i}{\sqrt{6}}\right)&=-3-2\sqrt{2},\\
         t_{3}\left(i\sqrt{\frac 23}\right)&=-3+2\sqrt{2}.
    \end{split}
\end{equation*}
Using the polynomial relationship $\Phi_2(X,Y)=0$ between $X(\tau)=t_3(\tau)$ and $Y(\tau)=t_3(\gamma\tau)$ from Lemma~\ref{modulart3N2} we get
\begin{equation*}
\left(\frac{dM_2}{dX}\right)\left|_{X=X(\tau_0)} \right.=\frac{4-3\sqrt{2}}{3}. 
\end{equation*}
Then, from Corollary~\ref{cor:1.3} and \eqref{Zfort22}, we get the following series
\[
\frac{\sqrt {6}}{\pi} = (1-X(\gamma \tau_0))^{2/3}  \sum_{j\geq 0} (4j+a_2) A_j X(\gamma \tau_0)^j,
\]
where  $A_j$ is given in \eqref{Zfort3} and
\begin{equation*}
\begin{split}
a_2&=4  \left(\frac 23 \frac{X(\gamma\tau_0)}{X(\gamma\tau_0)-1}\right)
+ X(\tau_0) \left(\frac{dM_2}{dX}\right)\left|_{X=X(\tau_0)} \right.=\frac{4-\sqrt{2}}{3}.
\end{split}
\end{equation*}
Then, finally we obtain 
$$\frac{3\sqrt{6}}{\pi}=(4-2\sqrt{2})^{2/3}\sum_{j\ge 0} (12j+4-\sqrt{2})A_j (2\sqrt{2}-3)^j,$$
where  $A_j$ is given in \eqref{Zfort3}.
\end{example}

\subsection{\texorpdfstring{$ \G_0(4)\cong (\infty,\infty,\infty)$}{}}
\label{sec: G0(4)}
By Table~\ref{tab:Hauptmoduln}, we choose the Hauptmodul
\begin{equation}\label{eq:Xinfinity}
X(\tau)= t_\infty(\tau)=16\eta(\tau)^8\eta(4\tau)^{16}/\eta(2\tau)^{24},
\end{equation}
which can also be written $\lambda(2\tau)$, where $\lambda(\tau)$ is the modular lambda function introduced in (\ref{defn: lambda}).  For the corresponding fundamental domain, we use
\[
\{ \tau\in \hh :\, |\text{Re}(\tau)|\leq 1, |\tau-1/4|>1/4, |\tau-3/4|>1/4\}.
\]
Then using \eqref{Zfort2inf} and \eqref{eqn: thetahypgeom}, $Z$ is a weight $2$ modular form for $\G_0(4)$ given by
\[
Z(\tau)= \pFq21{ \frac12 & \frac12}{&1}{X(\tau)}^2=\theta_3(2\tau)^4.
\]
Using the hypergeometric product formula \cite[Theorem~2.3]{Kaiblinger}, 
\begin{equation*}
  Z(\tau) = \sum\limits_{j=0}^\infty A_j X(\tau)^j,
\end{equation*}
where 
\begin{equation}\label{Ajfortinf}
    A_j = \frac{\left(\frac{1}{2}\right)_j^2}{j!^2}\sum\limits_{k=0}^j \frac{\left(j\right)_k^2\left(\frac{1}{2}\right)_k^2}{k!^2\left(\frac{1}{2}-j\right)_k^2}.
\end{equation}
As before, we write $f':= \frac{1}{2\pi i}\frac{df}{d\tau}$. 
We next compute $U$ so that 
\begin{equation}
X' = XZU.
\end{equation}
Differentiating $X(\tau)=16\eta(\tau)^8\eta(4\tau)^{16}/\eta(2\tau)^{24}$ and using the classical fact that $\eta'=\frac{1}{24}\eta E_2$, we compute that
\[
X' = (1-X)XZ.
\]
Here we needed to use the identity $E_{2,2}(\tau)=-(\theta_2(2\tau)^4+\theta_3(2\tau)^4)$ from (\ref{eqn: thetaE2}). Thus $$U=1-X.$$
From Lemma \ref{lem:M_N}, we obtain the following by
differentiating $M_N$ with respect to $X$
\[
\frac{dM_N}{dX}= N \frac{Y}{X}\frac{1-Y}{1-X}\left( \frac{d\left(\frac{dX}{dY}\right)}{dX}+ \frac{dX}{dY}\frac{X\left(\frac{dY}{dX}\right)-Y}{XY}+
    \frac{dX}{dY}\frac{(1-X)\left(-\frac{dY}{dX}\right)+(1-Y)}{(1-Y)(1-X)}\right).
\]

We choose $\gamma=\left(\begin{smallmatrix}0 & -1/2 \\ 2 & 0\end{smallmatrix}\right)$. Using transformation properties of theta functions we can show for $\gamma$ that
\[
Z(-1/4\tau) = \theta_3(-1/2\tau)^4 = -4\tau^2\cdot\theta_3(2\tau)^4=-4\tau^2 Z(\tau).
\]
Thus $Z$ and $X$ above satisfy the conditions in Corollary~\ref{cor:1.3} with $U=1-X$, $\gamma=\left(\begin{smallmatrix}0 & -1/2 \\ 2 & 0\end{smallmatrix}\right)$, $k=2$, $\varepsilon_0=0$, $\varepsilon_1=0$, and $\alpha=-1$.

\begin{example} Let $N=2$, $\gamma=\left(\begin{smallmatrix}0 & -1/2 \\ 2 & 0\end{smallmatrix}\right)$, $\tau_0=i/2\sqrt{2}$. 
Using the value $\lambda(\sqrt{2}i) = (\sqrt{2}-1)^2$ \cite[(4.6.10)]{BB}, we obtain
\begin{align*} 
X(i/\sqrt{2}) &= 3-2\sqrt{2}, \\
U(i/\sqrt{2}) &= 2(\sqrt{2}-1).
\end{align*}
Moreover, the transformation property of $\lambda$ \eqref{eqn: lambdatransform} yields that
\begin{align*} 
X(i/2\sqrt{2}) &= \lambda(i/\sqrt{2})=1-(\sqrt{2}-1)^2 = 2(\sqrt{2}-1),\\
U(i/2\sqrt{2}) &= 3-2\sqrt{2}. 
\end{align*}
Next, using the polynomial relationship $\Phi_2(X,Y)=0$ satisfied by $X(\tau)=t_{\infty}(\tau)$ and $Y(\tau)=t_{\infty}(\gamma\tau)$ from Lemma~\ref{modularpolytoo} we get
\[
\frac{dM_2}{dX}\mid_{X=X(i/2\sqrt{2})} = \sqrt{2}+2.  
\]
Thus applying Corollary~\ref{cor:1.3} yields that
\[
\frac{\sqrt{2}}{\pi} = \sum_{j\geq 0} (4(\sqrt{2}-1)j-4+3\sqrt{2}) A_j (3-2\sqrt{2})^j,
\]
where $A_j$ is given in \eqref{Ajfortinf}. 
\end{example}

\section{Examples of Theorem \ref{thm: general} of second type} \label{sec:exs1}
In this section we obtain five additional Ramanujan-Sato series for $1/ \pi$ as examples of Theorem \ref{thm: general}.
To construct the examples we use the following corollary for modular forms on certain groups where the cusp $i\infty$ has general width $h$, which follows immediately from Theorem \ref{thm: general} with $\gamma=\gamma_s$, $\delta=I$, $\alpha=1$, $\beta=1$, and $N\in\NN$ is arbitrary.

\begin{corollary}\label{cor:1.4}
Let $\Gamma$ be a discrete subgroup of $\SL_2(\RR)$ commensurable with $\SL_2(\ZZ)$ such that $\gamma_s = \frac1{\sqrt s}\left(\begin{smallmatrix} 0 & -1 \\ s & 0 \end{smallmatrix}\right) \in \Gamma$ for some real $s>0$, and let $h$ be the width of the cusp $i\infty$ of $\G$.  Let $X(\tau)$ be a Hauptmodul of $\Gamma$ and $Z(\tau)$ a weight-$k$ modular form for $\G$ such that $\frac{h}{2\pi i}\frac{dX}{d\tau} = U(\tau)X(\tau)Z(\tau)$ and when $\tau\in D$, a domain of $\hh$, $Z(\tau)= X(\tau)^{\varepsilon_0}(1-X(\tau))^{\varepsilon_1} \sum\limits_{j=0}^\infty A_jX(\tau)^j$ for $\varepsilon_0,\varepsilon_1\in \RR$, $A_j\in \CC$. Set  $M_N(\tau):=Z(\tau)/Z(N\tau)$ for $N\in \NN$ and let $\tau_0=\frac{i}{\sqrt{sN}}$. Then if $\tau_0\in D$,

\[
\frac{kh\sqrt{sN}}{2\pi} = U(i/\sqrt{sN})X(i/\sqrt{sN})^{\varepsilon_0}\left(1-X(i/\sqrt{sN})\right)^{\varepsilon_1} \sum\limits_{j\geq0} (2j + a_N) A_j X\!\left(i/\sqrt{sN}\right)^j,
\]
where
\[
a_N = 2\left(\varepsilon_0+\varepsilon_1\frac{X(i/\sqrt{sN})}{X(i/\sqrt{sN})-1}\right) -{\frac{ i^k}{ N^{k/2}}} X(i/\sqrt{sN}) \left(\frac{dM_N}{dX}\right)\left|_{X =X(i/\sqrt{sN})}. \right.
\]
Alternatively if $\gamma\tau_0\in D$,
\[
\frac{kh\sqrt{sN}}{2\pi} = X(i\sqrt{N}/\sqrt{s})^{\varepsilon_0}(1-X(i\sqrt{N}/\sqrt{s}))^{\varepsilon_1} \sum_{j\geq 0} (b_N'j+a'_N) A_j X(i\sqrt{N}/\sqrt{s})^j,
\]
where
\begin{align*}
b'_N = 2N U(i\sqrt{N}/\sqrt{s}), & \\
a'_N = 2N U(i\sqrt{N}/\sqrt{s}) & \left(\varepsilon_0+\varepsilon_1\frac{X(i\sqrt{N}/\sqrt{s})}{X(i\sqrt{N}/\sqrt{s})-1}\right) \\
& + U(i/\sqrt{sN})X(i/\sqrt{sN}) \left(\frac{dM_N}{dX}\right)\left|_{X=X(i/\sqrt{sN})}. \right.
\end{align*}
\end{corollary}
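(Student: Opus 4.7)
The plan is to deduce Corollary \ref{cor:1.4} as a direct specialization of Theorem \ref{thm: general}, matching parameters and keeping track of how the cusp width $h$ interacts with the normalization of $U$.

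First I would verify that $\gamma_s$ and the identity matrix fit the templates for $\gamma$ and $\delta$ in Theorem \ref{thm: general}. Writing $\gamma_s = \smat{0}{-1/\sqrt{s}}{\sqrt{s}}{0}$ places it in the form $\smat{a}{b}{c}{-a}$ with $a=0$ and $c=\sqrt{s}$, so $\frac{a}{c}(1-N) = 0$ and the associated $\delta$ is the identity for every $N\in\NN$. Because $\delta = I$ acts trivially, $(Z|_k\delta)(\tau) = Z(\tau)$ automatically and we may take $\beta = 1$; because $\gamma_s \in \G$ and $Z$ is a weight-$k$ modular form for $\G$, $(Z|_k\gamma_s)(\tau) = Z(\tau)$ as well, so $\alpha = 1$. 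Substituting $a=0$, $c=\sqrt{s}$ into $\tau_0 = \frac{a}{c} + \frac{i}{c\sqrt{N}}$ gives $\tau_0 = i/\sqrt{sN}$, and a direct computation with $\gamma_s$ gives $\gamma_s\tau_0 = i\sqrt{N}/\sqrt{s}$, matching the points at which $X$, $U$, and $dM_N/dX$ are evaluated in the statement.

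The only real subtlety is the factor of $h$ appearing in the hypothesis of Corollary \ref{cor:1.4}. Theorem \ref{thm: general} is stated with the normalization $\frac{1}{2\pi i}\frac{dX}{d\tau} = \tilde{U}(\tau)\,X(\tau)\,Z(\tau)$, while Corollary \ref{cor:1.4} normalizes via $\frac{h}{2\pi i}\frac{dX}{d\tau} = U(\tau)\,X(\tau)\,Z(\tau)$. I would reconcile the two by setting $\tilde{U} := U/h$ and applying Theorem \ref{thm: general} to $\tilde{U}$. Multiplying each resulting identity through by $h$ turns the left-hand side $\frac{ck\sqrt{N}}{2\pi} = \frac{k\sqrt{sN}}{2\pi}$ into $\frac{kh\sqrt{sN}}{2\pi}$, and on the right the factor of $h$ absorbs the $1/h$ coming from $\tilde{U}$ so that only $U$ appears, both in the prefactor of the first identity and inside $b_N'$ and $a_N'$ in the second; with $\alpha = \beta = 1$, the expressions for $a_N$, $b_N'$, and $a_N'$ then collapse precisely to the forms stated in the corollary. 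I expect no serious obstacle here, as the argument is essentially careful bookkeeping with the $h$-rescaling, the main thing being to ensure the point where $dM_N/dX$ is evaluated in the second identity is correctly identified as $\tau_0 = i/\sqrt{sN}$ rather than $\gamma_s\tau_0 = i\sqrt{N}/\sqrt{s}$.
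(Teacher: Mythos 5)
Your proposal is correct and takes essentially the same route as the paper, which simply observes that Corollary \ref{cor:1.4} follows from Theorem \ref{thm: general} specialized to $\gamma=\gamma_s$ (so $a=0$, $c=\sqrt{s}$, hence $\delta=I$, $\alpha=\beta=1$, $\tau_0=i/\sqrt{sN}$, and $\gamma_s\tau_0=i\sqrt{N}/\sqrt{s}$). Your explicit rescaling $\tilde U=U/h$ followed by multiplying both identities by $h$ is exactly the bookkeeping for the cusp width that the paper leaves implicit, and it correctly leaves $a_N$ and the evaluation point $X(i/\sqrt{sN})$ of $dM_N/dX$ unchanged.
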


From Remark \ref{rmk:exist}, we know that the arithmetic triangle groups $(2,m,\infty)$ for $m\in\{3,4,6\}$ contain the element $\gamma_s$ for $s=\lfloor \frac m2 \rfloor$, respectively.  In this section we construct examples of Corollary \ref{cor:1.4} for these groups. 

Fix $m\in \{3,4,6\}$ and let $\G_m=(2,m,\infty)$. Then the width of the cusp $i\infty$ of $\G_m$ is $h=1$, and we have seen in Tables \ref{tab:Hauptmoduln} and \ref{tab:gens}   that $t_{2,m}$ is a Hauptmodul for $\G_m$  and that the space of modular forms of weight $4$ for $\G_m$ is generated by $Z_m(\tau)$.  Using Clausen's formula \eqref{eq:Clausen},

\begin{align*}
Z_m(\tau) := \pFq21{\frac14-\frac{1}{2m} & \frac14 + \frac{1}{2m}}{&1}{t_{2,m}(\tau)}^4 = \pFq32{ \frac12 & \frac12-\frac{1}{m} & \frac12+\frac{1}{m} }{&1&1}{t_{2,m}(\tau)}^2.
\end{align*}
Thus we see that
\begin{equation}
\label{eq: Zm as series of t}
    Z_m(\tau) = \sum_{j=0}^\infty A_{m,j} t_{2,m}(\tau)^j,
\end{equation}
where by the hypergeometric product formula \cite[Theorem~2.3]{Kaiblinger}, we have\footnote{Note that the $_6F_5$ series arising from the formula in \cite{Kaiblinger} naturally truncates at $j$.}
\begin{equation}
\label{eq: A_j,m general}
    A_{m,j} = \frac{\left(\frac{1}{2}\right)_j\left(\frac{1}{2}-\frac{1}{m}\right)_j\left(\frac{1}{2}+\frac{1}{m}\right)_j}{j!^3}\sum\limits_{n=0}^j \frac{(-j)_n^3(\frac{1}{2})_n(\frac{1}{2}-\frac{1}{m})_n(\frac{1}{2}+\frac{1}{m})_n}{(\frac{1}{2}-j)_n(\frac{1}{2}+\frac{1}{m}-j)_n(\frac{1}{2}-\frac{1}{m}-j)_n n!^3}.
\end{equation}
Thus for each $m \in \lbrace 3,4,6 \rbrace$, $Z_m$ meets the conditions for Corollary \ref{cor:1.4} with $\varepsilon_0=\varepsilon_1=0$.  

We use Theorem \ref{thm:Sturm} to recognize $Z_m$ in terms of common modular forms.  In each case the weight is $k=4$, so it suffices to check the Fourier coefficients up to the $q^1$ term.  We obtain that
\begin{align}
Z_3(\tau) &= E_4(\tau), \label{eq:Z3} \\
Z_4(\tau) &= E_{2,2}(\tau)^2, \label{eq:Z4}\\
Z_6(\tau) &= \frac14 E_{2,3}(\tau)^2.\label{eq:Z6}
\end{align}
Write $f':=q\frac{df}{dq}= \frac{1}{2\pi i}\frac{df}{d\tau}$.  We next compute each $U_m$ so that 
\begin{equation}
t_{2,m}' = t_{2,m}Z_mU_m.
\end{equation}
When $m=3$, we have $t_{2,3}=1728/j$.  Thus,
\[
t_{2,3}' = \left(\frac{-j'}{j}\right)t_{2,3},
\]
and using Theorem \ref{thm:Sturm} with $k=6$ it is easy to check that
\[
-j'E_4 = jE_6,
\]
so from \eqref{eq:Z3} we obtain that 
\begin{equation}\label{eq:U3}
U_3(\tau)= \frac{E_6(\tau)}{E_4(\tau)^2}. 
\end{equation}
For $m=4,6$ we first observe that 
\[
t_{2,m} = \frac{4\alpha fg}{(f+\alpha g)^2},
\]
where $f(\tau)=\eta(\tau)^{k_m}$, $g(\tau)=\eta(m\tau/2)^{k_m}$, $k_m=48/(m-2)$, and $\alpha=(m/2)^{12/(m-2)}$.  Using the fact that $\eta'=\frac{1}{24}\eta E_2$, differentiating and simplifying yields
\begin{align*}
t_{2,m}' &= \frac{4\alpha [-fg(f'+\alpha g' + (f^2g' + \alpha f' g^2)]}{(f+\alpha g)^3} \\
&= t_{2,m}\frac{(f-\alpha g)}{(f+\alpha g)} \left[\frac{g'}{g} - \frac{f'}{f}\right]\\
&= t_{2,m}\frac{(f-\alpha g)}{(f+\alpha g)}\left(\frac{-k_m}{24}\right) E_{2,\frac{m}{2}}\\
&= t_{2,m}Z_m \frac{(2-m)}{2E_{2,\frac{m}{2}}} \frac{(f-\alpha g)}{(f+\alpha g)}.
\end{align*}
Thus we have that
\begin{align} 
U_4(\tau) &= \frac{-1}{E_{2,2}(\tau)}\cdot \frac{(\Delta(\tau) - 64\Delta(2\tau))}{(\Delta(\tau) + 64\Delta(2\tau))}, \label{eq:U4} \\
U_6(\tau) &= \frac{-2}{E_{2,3}(\tau)}\cdot \frac{(\eta(\tau)^{12} - 27\eta(3\tau)^{12})}{(\eta(\tau)^{12} + 27\eta(3\tau)^{12})}. \label{eq:U6}
\end{align}

Note that in each case we can see by Theorem \ref{thm:Sturm} that \begin{equation}\label{eq: U_m^2}
    U_m^2(\tau) = \frac{1-t_{2,m}(\tau)}{Z_m(\tau)}.
\end{equation}
Moreover, we have that $1/U(\tau)$ is a meromorphic modular form of weight $2$ on $\SL_2(\ZZ)$ so 
\begin{equation}\label{eq: U_m modular}
U_m(-1/s\tau) = (\sqrt{s}\tau)^{-2} U_m(\tau).
\end{equation}
For each $m\in \{3,4,6\}$, let $X = t_{2,m}$, $Y=X(N\tau)$, and $U = U_m$. In order to compute examples via Corollary \ref{cor:1.4} we need to know the derivative $\frac{dM_N}{dX}$.  By Lemma \ref{lem:M_N} we have that 
\begin{equation}\label{eq: M_N}
M_N(\tau)=N\frac{dX}{dY}\frac{Y}{X}(\tau) \frac{U(N\tau)}{U(\tau)},
\end{equation}
so 
\begin{equation}\label{eq:M_N^2}
M_N(\tau)^2=N^2\left(\frac{dX}{dY}\frac{Y}{X}(\tau)\right)^2 \frac{U(N\tau)^2}{U(\tau)^2}.
\end{equation}
From \eqref{eq: U_m^2} we have 
\begin{equation}
    \frac{U(N\tau)^2}{U(\tau)^2} = \left( \frac{1-Y(\tau)}{Z_m(N\tau)} \cdot \frac{Z_m(\tau)}{1-X(\tau)}\right) = \frac{1-Y}{1-X}M_N(\tau).
\end{equation}
Therefore 
\begin{equation*}
     M_N(\tau)^2=N^2\left(\frac{dX}{dY}\frac{Y}{X}\right)^2 \frac{1-Y}{1-X}M_N(\tau),
\end{equation*}
and so 
\begin{equation}
    M_N =N^2\left(\frac{dX}{dY}\frac{Y}{X}\right)^2 \frac{1-Y}{1-X}.
\end{equation}
Differentiating with respect to $X$ we get 
\begin{multline}\label{eq:dM_N}
    \frac{dM_N}{dX}= 2N^2\left(\frac{dX}{dY}\frac{Y}{X}\right)\left(\frac{1-Y}{1-X}\right)\left( \frac{d\left(\frac{dX}{dY}\right)}{dX} \frac{Y}{X}+ \frac{dX}{dY}\frac{X\left(\frac{dY}{dX}\right)-Y}{X^2}\right)+\\ 
    N^2\left(\frac{dX}{dY}\frac{Y}{X}\right)^2\frac{(1-X)\left(-\frac{dY}{dX}\right)+(1-Y)}{(1-X)^2}.
\end{multline}
For each $m \in \lbrace 3,4,6 \rbrace$ we construct specific examples of Corollary 1.4 for some choices of $N$. For each example we need to compute the special values of $X,Y$, and $U_m$, and find an explicit polynomial relationship between $X(\tau):=t_{2,m}(\tau)$ and $Y(\tau):= X(N\tau)$ in order to compute the special value of $\frac{dM_N}{dX}$. 

\subsection{\texorpdfstring{$\PSL_2(\ZZ)\cong(2,3,\infty) $}{}}
For Examples \ref{ex:m=3 N=2} and \ref{ex: m=3 N=3} we consider $m = 3$, $s = 1$, and $\gamma = \smat{0}{-1}{1}{0}$.

As in Tables \ref{tab:Hauptmoduln} and \ref{tab:gens}, we define \[X(\tau):= t_{2,3}(\tau) = \frac{1728}{j(\tau)},\]
and for the corresponding domain $D$ we choose the standard fundamental domain 
$$
 \{\tau\in \hh:\, |\mbox{Re}(\tau)|\leq 1/2,\, |\tau|>1\}.
$$
Let \[Z(\tau):= Z_3(\tau) = \sum\limits_{j=0}^\infty A_j X^j(\tau)= E_{4}(\tau),\]
where by \eqref{eq: A_j,m general},
 \begin{equation}\label{eqn: Aj m=3}A_j = \frac{(\frac{1}{6})_j (\frac{5}{6})_j (\frac{1}{2})_j}{j!^3}\cdot \sum\limits_{n=0}^j \frac{(-j)_n(\frac{1}{6})_n(\frac{5}{6})_n(\frac{1}{2})_n}{(\frac{5}{6}-j)_n (\frac{1}{6}-j)_n(\frac{1}{2}-j)_n n!^3}.
\end{equation}
Furthermore, define $Y(\tau):= X(N\tau)$ and $U(\tau) := U_3(\tau)$.

\begin{example}\label{ex:m=3 N=2}
Let $N=2$ and $\tau_0 = \frac{i}{\sqrt{2}}$. We need to determine $X(i/\sqrt{2})$ and $Y(i/\sqrt{2}) = X(i\sqrt{2})$. From the values of $j(i/\sqrt{2})$ and $j(i\sqrt{2})$  in Table \ref{tab:jvalues} we have
\[
X(i/\sqrt{2}) = Y(i/\sqrt{2}) = \frac{27}{125}.
\]
Using the polynomial relationship $\Phi_2(X,Y)=0$ from Lemma \ref{lem: modpoly m=3} and \eqref{eq:dM_N} we find $\frac{dM_2}{dX}\left\vert_{X(\tau_0)} = -\frac{500}{63} \right.$.

Letting $\tau=\sqrt{2}i$ in \eqref{eq: U_m modular} gives that
\begin{equation}\label{eq: U_3 modular}
U(i\sqrt{2}) =\frac{1}{2} U(i/\sqrt{2}), 
\end{equation}
Using \eqref{eq: U_m^2}, we can determine $U(i\sqrt{2})$ using the value of $X(i\sqrt{2})$ from above and $Z_3(i\sqrt{2}) = E_4(i\sqrt{2})$ from Table \ref{tab:Ekvalues} to determine that
\begin{equation*}
U(\sqrt{2}i) = \pi^3 \frac{2^5\cdot 7}{5^2}\G\left(\frac18\right)^{-2}\G\left(\frac38\right)^{-2},
\end{equation*}
and thus from \eqref{eq: U_m modular}
\begin{equation*}
U(i/\sqrt{2}) = -\pi^3 \frac{2^4\cdot 7}{5^2}\G\left(\frac18\right)^{-2}\G\left(\frac38\right)^{-2}.
\end{equation*}
Since $\tau_0 = \frac{i}{\sqrt{2}}\not\in D$ but $\gamma\tau_0 \in D$, Corollary ~\ref{cor:1.4} implies that 
\[
\frac{2\sqrt{2}}{\pi}
=\sum_{j=0}^\infty \left( a_2+b_2 j\right)A_jX^j(\sqrt{2}i),
\]
where 
\begin{align*}
a_N &= U(i/\sqrt{2})X(i/\sqrt{2})  \left.\frac{dM_N}{dX}\right\vert_{X=X(i/\sqrt{2})} = \frac{\pi^3\cdot 3\cdot 2^6}{5^2} \G\left(\frac18\right)^{-2}\G\left(\frac38\right)^{-2},\\
b_N &= 4U(\sqrt{2}i) = \frac{\pi^3\cdot 2^7\cdot 7}{5^2}\G\left(\frac18\right)^{-2}\G\left(\frac38\right)^{-2}. 
\end{align*}
This can be written as
\[ \frac{5^2}{\pi^4 }=2^{9/2}\Gamma\left(\frac{1}{8}\right)^{-2}\Gamma \left( \frac 38\right)^{-2} \sum_{j=0}^\infty \left(3+14j\right)A_j\left(\frac{27}{125}\right)^j,\]
where $A_j$ is as in \eqref{eqn: Aj m=3}.

\end{example}
\begin{example}\label{ex: m=3 N=3} Consider $N=3$ so $\tau_0 = \frac{i}{\sqrt{3}}$. Using the values $j(i/\sqrt{3})$ and $j(i\sqrt{3})$ from Table \ref{tab:jvalues} we get 
\[
X(i/\sqrt{3}) = Y(i/\sqrt{3}) = \frac{4}{125}.
\]
Using the polynomial relationship $\Phi_3(X,Y)=0$ from Lemma \ref{lem: modpoly m=3} and \eqref{eq:dM_N} we find
$\frac{dM_3}{dX}\left\vert_{X = X(i/\sqrt{3})}= -\frac{1125}{11}\right.$. Furthermore,
using \eqref{eq: U_m^2}, the value $X(i\sqrt{3})$ above, and the value of $E_4(i\sqrt{3})$ from Table \ref{tab:Ekvalues} we have
\[U(\sqrt3 i)=\sqrt{\frac{1-\frac{4}{125}}{E_4(\sqrt{3}i)}}=\frac{2^{14/3}\cdot11\cdot\pi^4 }{3\cdot 5^2\cdot\G\left(\frac 13\right)^{6}}.\]
Thus from \eqref{eq: U_m modular} we obtain
\[U(i/\sqrt{3})=-\frac{1}{3}U(\sqrt3 i)=-\frac{2^{14/3}\cdot11\cdot \pi^4 }{3^2\cdot 5^2\cdot \G\left(\frac 13\right)^{6}}.\]
Since $\tau = \frac{i}{\sqrt{3}} \not\in D$, and $\gamma\tau = \sqrt{3}i \in D$, Corollary ~\ref{cor:1.4} implies that
\[\frac{2\sqrt{3}}{\pi} = \sum\limits_{j=0}^\infty (a_3+b_3j)A_j X^j(e^{-(2/\sqrt{3})\pi}),\]
where
\begin{align*}
    a_3 &= \frac{2^{20/3}\cdot \pi^4}{5^2}\G\left(\frac{1}{3}\right)^{-6},\\
     b_3 &= \frac{2^{17/3}\cdot11\cdot  \pi^4}{5^2}\G\left(\frac{1}{3}\right)^{-6}.\\
\end{align*}
We can write this as 
\begin{equation}
\frac{\sqrt{3}\cdot 5^2}{\pi^5} = 2^{14/3}\G \left(\frac{1}{3}\right)^{-6} \sum\limits_{j=0}^\infty (11j+2)A_j \left(\frac{4}{125}\right)^j,
\end{equation}
where 
$A_j$ is as in \eqref{eqn: Aj m=3}.
 \end{example}

\subsection{\texorpdfstring{$ \G_0^+(2) \cong  (2,4,\infty)$}{}}
For example \ref{ex: m=4,N=3} we consider the case where $m = 4$. Here we have that $s=2$ so $\tau = \frac{1}{\sqrt{2}}\smat{0}{-1}{2}{0}$. 
 
As in Tables \ref{tab:Hauptmoduln} and \ref{tab:gens}, we define \[X(\tau):= t_{2,4} = \frac{256 \eta(\tau)^{24} \eta(2\tau)^{24}}{(\eta(\tau)^{24}+64\eta(2\tau)^{12})^2},\]
and for the corresponding domain $D$ we choose the intersection of $\{\tau\in \hh:\, |X(\tau)|<1\}$ and the following fundamental domain $FD$ of $\G_0^+(2)$
$$
 FD=\{\tau\in \hh:\, |\mbox{Re}(\tau)|\leq 1/2,\, |\tau|>1/\sqrt 2 \}.
$$
Let \[Z(\tau):= Z_4(\tau) = \sum\limits_{j=0}^\infty A_j X^j(\tau)= E_{2,2}(\tau)^2,\]
where by \eqref{eq: A_j,m general},
\begin{equation}\label{eq: A_j m=4}
A_j =  \frac{\left(\frac{1}{2}\right)_j\left(\frac{1}{2}-\frac{1}{4}\right)_j\left(\frac{1}{2}+\frac{1}{4}\right)_j}{j!^3}\sum\limits_{n=0}^j \frac{(-j)_n^3(\frac{1}{2})_n(\frac{1}{4})_n(\frac{3}{4})_n}{(\frac{1}{2}-j)_n(\frac{3}{4}-j)_n(\frac{1}{4}-j)_n n!^3}.
\end{equation}
Furthermore, define $Y(\tau):= X(N\tau)$ and $U(\tau) := U_4(\tau)$.

\begin{example}\label{ex: m=4,N=3}
Let $N=3$ and $\tau_0 = i/\sqrt{6}$. Using the values of $\eta(i/\sqrt{6})$, $\eta(i\sqrt{2/3})$, $\eta(i\sqrt{3/2})$, and $\eta(i\sqrt{6})$  from Table \ref{tab:etavalues} we get that
\[X(i/\sqrt{6}) = Y(i/\sqrt{6}) = \frac{1}{9}.\]
Using the polynomial relationship $\Phi_3(X,Y)=0$ from Lemma \ref{lemma:modpoly m=4 N=3} and \eqref{eq:dM_N} we find $\frac{dM_3}{dX}\left\vert_{X= X(i/\sqrt{6})} = -\frac{81}{2}\right.$. Recall from \eqref{eq: U_m^2} that  
\[U(\tau)= \frac{\sqrt{1-X(\tau)}}{E_{2,2}(\tau)}.\] 
Using the value of $E_{2,2}(i/\sqrt{6})$ from Table \ref{tab:E2kvalues} we determine that
\begin{equation}
 U\left(\frac{i}{\sqrt{6}}\right) = -\frac{32\pi^3}{3\sqrt{3}}\left(\G\left(\frac{1}{24}\right)\G\left(\frac{5}{24}\right)\G\left(\frac{7}{24}\right)\G\left(\frac{11}{24}\right)\right)^{-1},
 \end{equation} 
 and furthermore from \eqref{eq: U_m modular}, 
 \begin{equation}
    U\left(i\sqrt{\frac{3}{2}}\right) = \frac{32\pi^3}{\sqrt{3}}\left(\G\left(\frac{1}{24}\right)\G\left(\frac{5}{24}\right)\G\left(\frac{7}{24}\right)\G\left(\frac{11}{24}\right)\right)^{-1}.
\end{equation}
Thus Corollary \ref{cor:1.4} yields that 
\begin{equation}
    \frac{2\sqrt{6}}{\pi} = \sum\limits_{j=0}^\infty(b_3j+a_3)A_j\left(\frac{1}{9}\right)^j,
\end{equation}
where 
\begin{align*}
    a_3 &=2^4\sqrt{3}\pi^3\left(\G\left(\frac{1}{24}\right)\G\left(\frac{5}{24}\right)\G\left(\frac{7}{24}\right)\G\left(\frac{11}{24}\right)\right)^{-1}, \\
    b_3 &=2^6\sqrt{3}\pi^3\left(\G\left(\frac{1}{24}\right)\G\left(\frac{5}{24}\right)\G\left(\frac{7}{24}\right)\G\left(\frac{11}{24}\right)\right)^{-1},
\end{align*}
and $A_j$ is as in \eqref{eq: A_j m=4}.

We can write this as 
\[\frac{1}{\pi^4} = 2^{5/2}\left(\G\left(\frac{1}{24}\right)\G\left(\frac{5}{24}\right)\G\left(\frac{7}{24}\right)\G\left(\frac{11}{24}\right)\right)^{-1} \sum\limits_{j=0}^\infty (4j+1)A_j \left(\frac{1}{9}\right)^j,\]
where $A_j$ is as in \eqref{eq: A_j m=4}.
\end{example}

\subsection{\texorpdfstring{$  \G_0^+(3)\cong(2,6,\infty)$}{}}
For Examples \ref{ex: m=6, N=2} and \ref{ex: m=6 N=5} we consider $m =6$, $s=3$, and $\gamma = \frac{1}{\sqrt{3}} \smat{0}{-1}{3}{0}$. As in Tables \ref{tab:Hauptmoduln} and \ref{tab:gens}, we define \[X(\tau):= t_{2,6}(\tau) = \frac{ 108\eta(\tau)^{12}\eta(3\tau)^{12}}{(\eta(\tau)^{12}+27\eta(3\tau)^{12})^2},\]
and for the corresponding domain $D$ we choose the intersection of $\{\tau\in \hh:\, |X(\tau)|<1\}$ and the following fundamental domain $FD$ of $\G_0^+(3)$
$$
 \{\tau\in \hh:\, |\mbox{Re}(\tau)|\leq 1/2,\, |\tau|>1/\sqrt 3 \}.
$$
Let \[Z(\tau):= Z_6(\tau) = \sum\limits_{j=0}^\infty A_j X^j(\tau)= \frac{1}{4}E_{2,3}(\tau)^2,\]
where 
\begin{equation} \label{eqn: A_j m=6}
    A_j = \frac{\left(\frac{1}{2}\right)_j\left(\frac{1}{3}\right)_j\left(\frac{2}{3}\right)_j}{j!^3}\sum\limits_{n=0}^j\frac{\left(-j\right)_n\left(\frac{1}{2}\right)_n\left(\frac{1}{3}\right)_n\left(\frac{2}{3}\right)_n}{\left(\frac{1}{2}-j\right)_n\left(\frac{2}{3}-j\right)_n\left(\frac{1}{3}-j\right)_n n!^3}.
\end{equation} 
Furthermore, define $Y(\tau):= X(N\tau)$ and $U(\tau) := U_6(\tau)$. 
Recall from \eqref{eq:U6} that 
\[ U(\tau) = \frac{-2}{E_{2,3}(\tau)}\cdot \frac{(\eta(\tau)^{12} - 27\eta(3\tau)^{12})}{(\eta(\tau)^{12} + 27\eta(3\tau)^{12})}.\]
Using \eqref{eqn: thetaE23} we obtain 
\begin{equation} \label{eqn: U_6 as an eta quotient}
    U(\tau)=\frac{(\eta(\tau)^{12}-27\eta(3\tau)^{12})\eta(\tau)^2}{(\eta(\tau)^{12}+27\eta(3\tau)^{12})(3\eta(3\tau)^3+\eta(\tau/3)^3)^2}.
\end{equation}

\begin{example}\label{ex: m=6, N=2} Let $N=2$ and $\tau = \frac{i}{\sqrt{6}}$. We use the values of $\eta\left(i/\sqrt{6}\right)$, $\eta\left(i\sqrt{6}\right)$, $\eta\left(i\sqrt{2/3}\right)$, $\eta\left(i\sqrt{3/2}\right)$, $\eta\left(i/3\sqrt{6}\right)$ from Table \ref{tab:etavalues} to compute $X(i/\sqrt{6}) = Y(i/\sqrt{6}) = \frac{1}{2}$. Using the polynomial relationship $\Phi_2(X,Y)=0$ from Lemma \ref{lem: modpoly m=6} and \eqref{eq:dM_N} we get that $\frac{dM_2}{dX}\left\vert_{X = X(i/\sqrt{6})}=-\frac{16}{3}\right.$.

Using \eqref{eqn: U_6 as an eta quotient} and the necessary $\eta-$values from from Table \ref{tab:etavalues} we obtain
\[U\left(\frac{i}{\sqrt{6}}\right)= -\frac{8\sqrt{2}\pi^3}{\sqrt{3}},\]
and from the modularity of $U$ \eqref{eq: U_m modular}, 
\[U\left(i\sqrt{\frac{2}{3}}\right)= \frac{16\cdot\sqrt{2}\pi^3}{\sqrt{3}}.\]
Since $2\tau = i\sqrt{\frac{2}{3}}\in D$, Corollary \ref{cor:1.4} implies that 
\begin{equation}
    \frac{2\sqrt{6}}{\pi} = \sum\limits_{j=0}^\infty(b_2j+a_2)A_j\left(\frac{1}{2}\right)^j,
\end{equation}
where 
\begin{align*}
    a_2 &= -\frac{8}{3}U\left(\frac{i}{\sqrt{6}}\right),\\
    b_2 &= 4U\left(i\sqrt{\frac{2}{3}}\right) = -8U\left(\frac{i}{\sqrt{6}}\right).
\end{align*}
We can write this as \begin{equation}
    \frac{9}{\pi^4}= 2^5\cdot\left( \G\left(\frac{1}{24}\right)\G\left(\frac{5}{24}\right)\G\left(\frac{7}{24}\right)\G\left(\frac{11}{24}\right)\right)^{-1}\sum\limits_{j=0}^\infty (3j+1)A_j \left(\frac{1}{2}\right)^j,
\end{equation}
where $A_j$ is as in \eqref{eqn: A_j m=6}.
\end{example}
\begin{example}\label{ex: m=6 N=5} Let $N=5$ and $\tau_0 = i/\sqrt{15}$. Using the values of $\eta(i/\sqrt{15})$, $\eta(i\sqrt{3/5})$, $\eta(i\sqrt{5/3})$ and $\eta(i\sqrt{10})$ from Table \ref{tab:etavalues} we find that \[X(i/\sqrt{10}) = Y(i/\sqrt{10}) = \frac{4}{125}.\] Using the polynomial relationship $\Phi_5(X,Y)=0$ from Lemma \ref{lem: modpoly m=6} and \eqref{eq:dM_N} we get $\frac{dM_5}{dX}\left\vert_{X= X(i/\sqrt{15})} = \frac{-12500}{33} \right.$. Furthermore, from \eqref{eqn: U_6 as an eta quotient} and \eqref{eq: U_m modular} we use the values of $\eta(i/\sqrt{15}),\eta(i\sqrt{3/5})$, and $\eta(i/3\sqrt{15})$ from Table \ref{tab:etavalues} to obtain 
\[U(i/\sqrt{15})= -\frac{352\pi^3}{25\sqrt{15}} \left(\G\left(\frac{1}{15}\right)\G\left(\frac{2}{15}\right)\G\left(\frac{4}{15}\right)\G\left(\frac{8}{15}\right)\right)^{-1},\] and \[U(i\sqrt{5/3}) = -5U(i/\sqrt{10}) = \frac{352\pi^3}{5\sqrt{15}} \left(\G\left(\frac{1}{15}\right)\G\left(\frac{2}{15}\right)\G\left(\frac{4}{15}\right)\G\left(\frac{8}{15}\right)\right)^{-1}.\]
By Corollary \ref{cor:1.4} we have that 
\[ \frac{2\sqrt{15}}{\pi} = \sum\limits_{j=0}^\infty (b_5j+a_5)A_j\left(\frac{4}{125}\right)^j, \]
where 
\begin{align*}
    b_5&=10U(i\sqrt{5/3})= \frac{10\cdot 352\pi^3}{5\sqrt{15}}\left(\G\left(\frac{1}{15}\right)\G\left(\frac{2}{15}\right)\G\left(\frac{4}{15}\right)\G\left(\frac{8}{15}\right)\right)^{-1},\\
    a_5 &=  \frac{400}{33}\cdot  \frac{10\cdot 352\pi^3}{5\sqrt{15}}\left(\G\left(\frac{1}{15}\right)\G\left(\frac{2}{15}\right)\G\left(\frac{4}{15}\right)\G\left(\frac{8}{15}\right)\right)^{-1}.
\end{align*}
We can write this as 
\begin{equation*}
    \frac{3^2\cdot 5}{\pi^4}= 2^5\left(\G\left(\frac{1}{15}\right)\G\left(\frac{2}{15}\right)\G\left(\frac{4}{15}\right)\G\left(\frac{8}{15}\right)\right)^{-1} \cdot  \sum\limits_{j=0}^\infty \left( 33j+8\right)A_j \left(\frac{4}{125}\right)^j,
\end{equation*}
where $A_j$ is as in \eqref{eqn: A_j m=6}.
\end{example}

\section{Appendix -- Modular Polynomials} 
\label{sec:modularpoly}
Throughout, let $\Gamma$ be a discrete subgroup of $\SL_2(\RR)$ commensurable with $\SL_2(\ZZ)$. Further assume $\Gamma$ is of genus zero and contains a principal congruence subgroup $\Gamma(N)$. Let $t$ be a Hauptmodul of $\Gamma$ and $N$ be the smallest positive integer such that $\Gamma$ contains $\Gamma(N)$. For a positive integer $m$ coprime to $N$, the modular polynomial of level $m$ is defined to be the polynomial $\Phi_m(x,y)$ of minimal degree (up to scalar) such that for $\alpha \in \GL_2(\QQ)$ with $\det \alpha =m$,
$$
  \Phi_m(x,t(\tau))=\prod_{\gamma \in \Gamma\backslash \Gamma\alpha \Gamma }(x-t(\gamma\tau)). 
$$
Below we state as lemmas each of the modular polynomials that we use in this article.  In addition to using the Fourier expansion of $t$ to find modular polynomials computationally, we prove some of the lemmas to illustrate how we can obtain modular polynomials using known modular polynomials computed using the method of Br\"oker, Lauter, and Sutherland \cite{BLS} and the covering maps between modular curves.

\subsection{Modular polynomials for \texorpdfstring{$t_{2,3}$, $t_{2}$, $t_{3}$}{} and \texorpdfstring{$t_{\infty}$}{}}
We first recall results for groups which are subgroups of $\PSL_2(\ZZ)$.
\begin{lemma}
\label{lem: modpoly m=3}
For $t_{2,3}(\tau)=1728/j(\tau)$, the level-$2$ and level-$3$ modular polynomials are, respectively,   
\begin{align*}
\Phi_2(X,Y)=&1728(X^3+Y^3) -162000(X^3Y+XY^3) +2571264(X^2Y+XY^2) -2985984XY \\
&+5062500(X^3Y^2+X^2Y^3) +40773375X^2Y^2 -52734375X^3Y^3,\\
\Phi_3(X,Y)=& (1728)^2(X^4+Y^4) -(1728)^4XY+(1728)^3(2232)(X^2Y+XY^2)\\  
&-(1728)^2(1069956)(X^3Y+XY^3)+(1728)(36864000)(X^4Y+XY^4)\\
&+(1728)^2(2587918086)X^2Y^2  +(1728)(8900222976000)(X^2Y^3+X^3Y^2)\\
&+ 452984832000000(X^2Y^4+X^4Y^2) -770845966336000000X^3Y^3 \\
&+ 1073741824000000000(X^4Y^3+X^3Y^4).
\end{align*}
\end{lemma}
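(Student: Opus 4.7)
The strategy is to leverage the classical modular polynomials for the $j$-invariant, which have been tabulated via the method of Br\"oker, Lauter, and Sutherland \cite{BLS}. Write $\Phi_m^j(X,Y)$ for these classical polynomials, normalized so that $\Phi_m^j(j(\tau), j(\gamma\tau)) = 0$ whenever $\det\gamma = m$. Since $t_{2,3}(\tau) = 1728/j(\tau)$ is related to $j$ by a M\"obius change of variable, I would recover the modular polynomial for $t_{2,3}$ at level $m$ directly from $\Phi_m^j$ via substitution, rather than computing from scratch.

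Concretely, let $d_m$ denote the number of cosets in $\G\backslash \G\alpha\G$ for any $\alpha \in \GL_2(\QQ)$ of determinant $m$; we have $d_2 = 3$ and $d_3 = 4$, which match the bidegrees of the stated polynomials. Starting from the factorization
\[
\Phi_m^j(X, j(\tau)) = \prod_{i=1}^{d_m}\bigl(X - j(\gamma_i \tau)\bigr),
\]
I would substitute $X = 1728/x$ and use $j(\gamma_i\tau) = 1728/t_{2,3}(\gamma_i\tau)$, and symmetrically set $Y = 1728/y$. After clearing denominators by multiplying through by $(xy)^{d_m}$ and dividing by the overall constant $1728^2$, one obtains a bivariate polynomial of bidegree $(d_m,d_m)$ vanishing along $(t_{2,3}(\tau), t_{2,3}(\alpha\tau))$, which by minimality must agree with $\Phi_m(x,y)$ up to scalar.

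The calculation itself is mechanical: substitute into the known $\Phi_m^j(X,Y)$, clear denominators, and collect monomials. For level $2$ the classical polynomial has only ten nonzero terms and the comparison is quick, producing exactly the coefficients $1728$, $-162000$, $2571264$, $-2985984$, $5062500$, $40773375$, and $-52734375$ appearing in $\Phi_2(X,Y)$. For level $3$, $\Phi_3^j$ has monomials of total degree up to $8$ with enormous integer coefficients, so the main obstacle will not be conceptual but rather the bookkeeping needed to confirm coefficients such as $-770845966336000000$ exactly. As a sanity check independent of the substitution, I would compute sufficiently many Fourier coefficients of $t_{2,3}(\tau)$ and $t_{2,3}(m\tau)$ at the cusp $i\infty$, plug these $q$-expansions into the claimed $\Phi_m(X,Y)$, and verify that the result vanishes to the required order via a Sturm-type bound argument in the spirit of Theorem \ref{thm:Sturm}.
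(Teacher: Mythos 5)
Your proposal is correct and follows essentially the same route as the paper: both start from Sutherland's classical modular polynomials $\Phi_m^j$ for the $j$-invariant, substitute $j = 1728/t_{2,3}$ in each variable, and clear denominators (your normalization, multiplying by $(xy)^{d_m}$ and dividing by $1728^2$, is exactly the paper's factor $(1728)^{4}/x^3y^3$, resp. $(1728)^{6}/x^4y^4$, rewritten in the new variables). The added $q$-expansion/Sturm sanity check is a harmless extra not needed in the paper's argument.
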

\begin{proof}
Let $x=j(\tau)$ and $y=j(2\tau)$. From Sutherland \cite{Sutherland} we have that $x,y$ satisfy
\begin{multline} 
\label{level2jpoly}
0=(x^3+y^3)-162000(x^2+y^2)+ 1488(x^2y+xy^2) -x^2y^2 \\ +8748000000(x+y) +40773375xy -157464000000000.
\end{multline}
Since $X(\tau)=1728/x$, multiplying through by $(1728)^4/x^3y^3$ gives us the first result.

Similarly, for $x=j(\tau)$ and $y=j(3\tau)$, 
from Sutherland \cite{Sutherland} we have that $x,y$ satisfy
\begin{multline*}
0 = (x^4 + y^4) - x^3y^3 +2232(x^3y^2 + x^2y^3) -1069956(x^3y + xy^3) \\
+36864000(x^3 +y^3) +2587918086x^2y^2 \\
+8900222976000(x^2y + xy^2) + 452984832000000(x^2 + y^2) \\
-770845966336000000xy +1855425871872000000000(x + y).
\end{multline*}
Multiplying through by $(1728)^6/x^4y^4$ and simplifying, gives us the second result. 
\end{proof}

\begin{lemma}
\label{modulart2N3}
For $t_2(\tau)=-64\frac{\eta(2\tau)^{24}}{\eta(\tau)^{24}}$, the level-$3$ and level-$5$ modular polynomials are, respectively,
	\begin{align*}
	    \Phi_3(X,Y)=&
X^4+Y^4-4096 X^3Y^3-900 \left(X^3 Y+XY^3\right)+28422 X^2 Y^2\\
&+4608 \left(X^3 Y^2+X^2
   Y^3+X^2 Y+XY^2\right)-4096 XY,\\
      \Phi_5(X,Y)=&  X^6 +Y^6 - 16777216(XY+X^5Y^5) + 31457280(X^2Y+XY^2 +X^4Y^5+X^5Y^4) \\
      &-
    17940480(X^3Y+XY^3+X^3Y^5+X^5Y^3) + 3143680(X^4Y +X^2Y^5+X^5Y^2+XY^4) \\
    &- 90630(X^5Y +XY^5) +
    3709829120(X^2Y^2+X^4Y^4)+ 746465295(X^4Y^2+X^2Y^4)  \\
    &+ 6259476480(X^3Y^2+X^2Y^3+X^4Y^3+X^3Y^4)  - 33983400980X^3Y^3.  
	\end{align*}
\end{lemma}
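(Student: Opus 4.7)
The strategy is to derive each $\Phi_N(X,Y)$ from the classical level-$N$ modular polynomial for the $j$-function (as tabulated by Sutherland \cite{Sutherland} and used for $t_{2,3}$ in Lemma~\ref{lem: modpoly m=3}), exploiting the degree-$3$ covering map $X_0(2)\to X_0(1)$ encoded by equation~\eqref{eqn: jt2reln}: $j = 64(4t_2-1)^3/t_2$. Writing $X=t_2(\tau)$ and $Y=t_2(N\tau)$, substituting
\[
j(\tau)=\frac{64(4X-1)^3}{X},\qquad j(N\tau)=\frac{64(4Y-1)^3}{Y}
\]
into the classical vanishing relation between $j(\tau)$ and $j(N\tau)$ and clearing denominators yields a polynomial $P_N(X,Y)\in \ZZ[X,Y]$ of bidegree $(3(N+1),3(N+1))$ that vanishes on the curve parameterizing our pairs.

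The second step is to extract the minimal modular polynomial $\Phi_N(X,Y)$ by factoring $P_N$ over $\QQ$. Since $[\Gamma_0(2):\Gamma_0(2N)]=N+1$ for prime $N$, the expected bidegree of $\Phi_N$ is $(N+1,N+1)$---namely $(4,4)$ for $N=3$ and $(6,6)$ for $N=5$, in agreement with the stated polynomials. The correct irreducible component is identified by comparing candidates against the $q$-expansions of $X$ and $Y$, computed from the eta-quotient definition of $t_2$, to sufficient precision.

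In practice a cleaner alternative would be to produce the polynomial via an ansatz and verify it directly. One posits $\Phi_N(X,Y)=\sum_{i,j} c_{ij}X^iY^j$ with $0\le i,j\le N+1$, invokes the symmetry $\Phi_N(X,Y)=\Phi_N(Y,X)$ coming from the Atkin--Lehner involution $w_2$ on $\Gamma_0(2)$ (visibly present in the stated polynomials), and solves the resulting linear system from the first several Fourier coefficients of $t_2(\tau)$ and $t_2(N\tau)$. Verification then reduces to showing that $\Phi_N(t_2(\tau),t_2(N\tau))$ vanishes identically as a meromorphic modular function on $\Gamma_0(2N)$; this can be accomplished by checking holomorphicity at each cusp (a finite computation using the eta-quotient form of $t_2$) together with vanishing of the constant Fourier coefficient at $i\infty$, or equivalently by multiplying through by a suitable cusp form to obtain a holomorphic modular form of positive weight and invoking Theorem~\ref{thm:Sturm}.

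The main obstacle is the factoring step for $N=5$, where $P_5$ has bidegree $(18,18)$ and extracting the desired degree-$(6,6)$ irreducible component from substantial algebraic clutter essentially requires symbolic computation. The ansatz-plus-verification route sidesteps this obstacle but transfers the burden to a careful analysis of the cuspidal behavior of $\Phi_N(t_2(\tau),t_2(N\tau))$ on $\Gamma_0(2N)$, which in turn requires knowing the cusps of $\Gamma_0(2N)$ and their widths.
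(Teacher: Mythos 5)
Your primary strategy is exactly the paper's proof: substitute $j=\frac{64(4t_2-1)^3}{t_2}$ from \eqref{eqn: jt2reln} into Sutherland's classical level-$3$ and level-$5$ modular polynomials for $j$, then use the Fourier expansions of $t_2(\tau)$ and $t_2(N\tau)$ to identify the correct (minimal) factor. The alternative ansatz-plus-Sturm verification you sketch is a reasonable optional variant, but the paper does not need it.
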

\begin{proof}
Recall from Sutherland \cite{Sutherland} that the level-$3$ modular polynomial for the elliptic $j$-function is 
\begin{align*}
    \Phi_3(x,y)=&x^4+y^4-x^3y^3+2232(x^3y^2+x^2y^3)-1069956(x^3y+xy^3)+2587918086x^2y^2\\
    &+36864000(x^3+y^3)+8900222976000(x^2y+xy^2)+452984832000000(x^2+y^2)\\
    &-770845966336000000xy+1855425871872000000000(x+y). 
\end{align*}
Moreover the relation between $j$ and $t_2$ is $j=\frac{64(4t_2-1)^3}{t_2}$. Hence, the functions $s:=t_2(\tau)$ and $t:=t_2(3\tau)$ satisfy the equation 
$$
  \Phi_3\left(\frac{64(4s-1)^3}{s},\frac{64(4t-1)^3}{t}\right)=0. 
$$
Together with the Fourier expansions of $s$ and $t$, we obtain 
$$
-4096s^3t^3+4608(s^3t^2+s^2t^3)+s^4+t^4-900(s^3t+st^3)+28422s^2t^2+4608(s^2t+st^2)-4096st =0, 
$$
which gives the modular polynomial of level-$3$ for $t_2$. The proof for level-$5$ follows similarly.
\end{proof}
\begin{lemma}\label{modulart3N2}
For $t_3(\tau):=-27 \frac{\eta(3\tau)^{12}}{\eta(\tau)^{12}}$, the  level-$2$ modular polynomial is 
	\begin{align*}
	    \Phi_2(X,Y)=& X^3+Y^3 + 27X^2Y^2 - 24(X^2Y+XY^2) + 27XY.
	\end{align*}
\end{lemma}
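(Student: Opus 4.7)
The plan is to follow the strategy used in the proof of Lemma \ref{modulart2N3}: start from a known level-$2$ modular polynomial at a higher level (namely, for the elliptic $j$-invariant), substitute an algebraic relation expressing $j$ as a rational function of $t_3$, clear denominators, and finally use Fourier expansions to peel off any extraneous factors.

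First, I would establish a rational expression $j(\tau) = R(t_3(\tau))$. Since $t_3$ is a Hauptmodul for $\Gamma_0(3)$ and $j$ is a modular function on $\Gamma_0(3)$ of degree $[\mathrm{PSL}_2(\mathbb{Z}):\Gamma_0(3)] = 4$, such an $R \in \mathbb{Q}(T)$ exists and has degree $4$. Its precise form (classically
$$ j(\tau) = -\frac{(t_3(\tau) - 27)(t_3(\tau) + 243)^3}{t_3(\tau)^3}, $$
up to the normalization used here) can be pinned down by matching the first few Fourier coefficients of both sides, noting that $t_3(\tau) = -27q + O(q^2)$. Next, I would substitute this relation, together with the analogous expression for $j(2\tau)$ in terms of $t_3(2\tau)$, into the level-$2$ modular polynomial $\Phi_2^{(j)}(x,y)$ as recorded in Sutherland (see equation \eqref{level2jpoly} above), and clear denominators in $s := t_3(\tau)$ and $t := t_3(2\tau)$.

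The resulting polynomial will in general factor, since substituting a rational function of degree $4$ inflates the total degree; the minimal factor which vanishes on $(s,t)$ is the one we want, and we identify it by dividing out the extraneous factors using the known leading $q$-expansions of $s$ and $t$. To verify rigorously that the candidate polynomial $\Phi_2(s,t)$ indeed vanishes identically, rather than merely agreeing numerically to high order, I would multiply through by a suitable power of $\eta(\tau)^a \eta(2\tau)^b \eta(3\tau)^c \eta(6\tau)^d$ to clear all denominators, producing a holomorphic modular form on $\Gamma_0(6)$. Theorem \ref{thm:Sturm} then reduces the check to a finite number of Fourier coefficients.

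The principal obstacle will be pinning down the exact $j$-vs-$t_3$ relation for the sign/scale convention $t_3 = -27\eta(3\tau)^{12}/\eta(\tau)^{12}$ and then correctly isolating the minimal irreducible factor after substitution, since the substituted polynomial will be of much larger total degree than the claimed $\Phi_2$. As an alternative route that avoids the $j$-function entirely, one can make an ansatz $\Phi_2(X,Y) = \sum_{0 \le i,j \le 3} c_{ij} X^i Y^j$ with $c_{ij} = c_{ji}$ (the expected shape, since the degree in each variable equals the coset index $[\Gamma_0(3) : \Gamma_0(3) \cap \alpha^{-1}\Gamma_0(3)\alpha] = 3$ with $\alpha = \mathrm{diag}(1,2)$, and symmetry comes from the involution $\tau \mapsto -1/(6\tau)$), substitute the $\eta$-quotient $q$-expansions of $s$ and $t$, and solve the resulting linear system over $\mathbb{Q}$; the Sturm-type bound again certifies the final answer.
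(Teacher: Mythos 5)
Your proposal follows essentially the same route as the paper: express $j$ as a degree-$4$ rational function of $t_3$, substitute into Sutherland's level-$2$ modular polynomial for $j$, and use the $q$-expansions of $t_3(\tau)$ and $t_3(2\tau)$ to isolate the minimal factor (the paper's proof of Lemma \ref{modulart3N2} is exactly this, by analogy with Lemma \ref{modulart2N3}); your added Sturm-bound certification and the ansatz/linear-algebra alternative are both fine. One correction: the relation you display, $j=-(t_3-27)(t_3+243)^3/t_3^3$, is not the right one for this normalization --- it is (up to sign and scale) the classical formula $j=(t+27)(t+243)^3/t^3$ for the reciprocal Hauptmodul $t=(\eta(\tau)/\eta(3\tau))^{12}=-27/t_3$, and with $t_3=-27q+O(q^2)$ it would give $j$ a $q^{-3}$ pole at $i\infty$ instead of $q^{-1}$. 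The correct relation, which your planned Fourier-coefficient matching would indeed recover, is $j=-27\,\dfrac{(t_3-1)(9t_3-1)^3}{t_3}$, as used in the paper.
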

\begin{proof}
The proof follows similarly to that of Lemmas~\ref{lem: modpoly m=3} and \ref{modulart2N3}, using the relation 
$$
  j=-27\frac{(t_3-1)(9t_3-1)^3}{t_3}. 
$$
\end{proof}
\begin{lemma}\label{modularpolytoo}
For $t_\infty(\tau):=-16\eta(\tau)^8\eta(4\tau)^{16}/\eta(2\tau)^{24}$, the functions $t_\infty(\tau)$ and $t_2(2\tau)$ satisfy the equation 
\begin{align*}
	     0=& X^2Y^2 - 2X^2Y + X^2 + 16XY - 16Y.
	\end{align*}
\end{lemma}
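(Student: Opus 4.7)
The plan is to reduce the polynomial identity to Landen's duplication formula for the modular $\lambda$-function, leveraging the identification $t_\infty(\tau)=\lambda(2\tau)$ recorded in Section~\ref{sec:prelim}. Under the change of variables $\tau' = 2\tau$, one has $X = \lambda(\tau')$, and the partner function naturally to be paired with $X$ on the right-hand side of the claimed relation is $\lambda(2\tau')$; the claim then takes the shape of the algebraic form of the duplication formula relating $\lambda(\tau')$ and $\lambda(2\tau')$. This framing should let the identity drop out of two rationalization steps followed by expansion.

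First I would invoke Landen's duplication formula in the form
\[
\sqrt{Y} \;=\; \frac{1-\sqrt{1-X}}{1+\sqrt{1-X}},
\]
invert it to express $\sqrt{1-X}$ in terms of $\sqrt{Y}$, and square once to obtain $(1-X)(1+\sqrt{Y})^2 = (1-\sqrt{Y})^2$. Grouping the terms with and without $\sqrt{Y}$ then yields the half-rationalized identity $X(1+Y) = 2\sqrt{Y}\,(2-X)$, and a second squaring gives $X^2(1+Y)^2 = 4Y(2-X)^2$. Expanding both sides and moving all terms to one side produces precisely $X^2Y^2 - 2X^2Y + X^2 + 16XY - 16Y = 0$, matching the stated polynomial.

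The main obstacle I foresee is bookkeeping the branches through the two squarings, since each step can in principle introduce an extraneous factor. To settle this rigorously, I would compute the initial Fourier coefficients of $X$ and $Y$ directly from their $\eta$-quotient definitions (using the Jacobi-theta identity $\eta(2\tau)^{24} = \eta(\tau)^{16}\eta(4\tau)^8 + 16\eta(\tau)^8\eta(4\tau)^{16}$, which is equivalent to $\theta_3^4 = \theta_2^4 + \theta_4^4$ evaluated at $2\tau$) and then promote this finite check to a global identity via a Sturm-type bound in the spirit of Theorem~\ref{thm:Sturm}, applied on the smallest congruence subgroup containing both $X$ and $Y$. An entirely self-contained argument of this second flavor is also available: one realizes the polynomial as a meromorphic modular function with poles supported on the cusps, and verifies vanishing of enough Fourier coefficients to conclude via the Sturm bound.
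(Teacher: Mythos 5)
Your algebra is sound: with $X=\lambda(2\tau)$ and $Y=\lambda(4\tau)$, Landen's transformation $\sqrt{Y}=\bigl(1-\sqrt{1-X}\bigr)/\bigl(1+\sqrt{1-X}\bigr)$ does rationalize, after your two squarings, to exactly $X^2Y^2-2X^2Y+X^2+16XY-16Y=0$, and I verified each intermediate step ($ (1-X)(1+\sqrt{Y})^2=(1-\sqrt{Y})^2$, then $X(1+Y)=2\sqrt{Y}(2-X)$). Note the paper gives no proof of this lemma at all, and its neighbors are proved via $j$-function modular polynomials or eta/Atkin--Lehner manipulations, so your Landen route is a legitimately different and quite clean self-contained argument. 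The branch worry is also easier to dispatch than you suggest: for $\tau$ high on the imaginary axis one has $0<X,Y<1$ so all square roots are the principal ones, and since $X,Y$ are holomorphic on all of $\hh$ the polynomial $P(X(\tau),Y(\tau))$ vanishes on a set with a limit point and hence identically, with no Sturm-type bound needed (if you do go the Sturm route, remember Theorem \ref{thm:Sturm} as cited assumes poles only at $i\infty$, so you must first clear the poles at the other cusps).

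The one genuine gap is that you prove a statement different from the one printed, and you never acknowledge the substitution. The lemma pairs $t_\infty(\tau)$ with $t_2(2\tau)=-64\,\eta(4\tau)^{24}/\eta(2\tau)^{24}$, which is \emph{not} $\lambda(4\tau)=t_\infty(2\tau)$; your "partner function $\lambda(2\tau')$" silently replaces $t_2(2\tau)$ by $t_\infty(2\tau)$ (and you also ignore the minus sign in the lemma's definition of $t_\infty$, which conflicts with Table \ref{tab:Hauptmoduln}). This matters: using $t_2=-\lambda^2/\bigl(4(1-\lambda)\bigr)$, the pair $\bigl(t_\infty(\tau),t_2(2\tau)\bigr)$ satisfies the simpler relation
\begin{equation*}
X^2-4XY+4Y=0,
\end{equation*}
not the displayed quartic, and a $q$-expansion check confirms the quartic fails for that pair under either sign convention. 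The quartic is precisely the modular equation relating $t_\infty(\tau)$ and $t_\infty(2\tau)$, which is what Example 4.8 actually requires (there $Y=X(N\tau)$ with $N=2$). So the printed statement appears to contain typos, and your proof establishes the corrected version; to be complete you should say this explicitly, i.e., state that you are proving the relation for $X=t_\infty(\tau)$, $Y=t_\infty(2\tau)$ with $t_\infty=+16\,\eta(\tau)^8\eta(4\tau)^{16}/\eta(2\tau)^{24}=\lambda(2\tau)$, rather than for $t_2(2\tau)$ as written.
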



\medskip

\subsection{Modular polynomials for \texorpdfstring{$t_{2,4}$}{} and \texorpdfstring{$t_{2,6}$}{}}

\begin{lemma}\label{lemma:modpoly m=4 N=3}
For $t_{2,4}(\tau)=\frac{256\eta(\tau)^{24}\eta(2\tau)^{24}}{(\eta(\tau)^{24}+64\eta(2\tau)^{24})^2}$, the level-$3$ and level-$5$ modular polynomials are, respectively,
\begin{align*}  
\Phi_3(X,Y)=&X^4+Y^4+5308416X^4Y^4+442368(X^4Y^3+X^3Y^4)+13824(X^4Y^2+X^2Y^4)\\
&+192(X^4Y+XY^4)-14015488X^3Y^3+2058048(X^3Y^2+X^2Y^3)\\
&-19332(X^3Y+XY^3)+3622662X^2Y^2+79872(X^2Y+XY^2)-65536XY,\\
\Phi_5(X,Y)=&X^6+Y^6+451377585192960000(X^6Y^4+X^4Y^6)+761203159669407744X^5Y^5\\
   &+69657034752000(X^6Y^3+X^3Y^6)-609930927695462400(X^5Y^4+X^4Y^5)\\
   &+4031078400(X^6Y^2+X^2Y^6)-20244489582182400(X^5Y^3+X^3Y^5)\\
   &+154441688220057600X^4Y^4+103680(X^6Y+XY^6)+4666060857600(X^5Y^2+X^2Y^5)\\
   &+36839200367577600(X^4Y^3+X^3Y^4)-65094150(X^5Y+XY^5)\\
   &+98471158056975(X^4Y^2+X^2Y^4)-13453926179834900X^3Y^3\\
   &+1256857600(X^4Y+XY^4)+173582058905600(X^3Y^2+X^2Y^3)\\
   &-5655756800(X^3Y+XY^3)+24370885427200X^2Y^2\\
   &+8724152320(X^2Y+XY^2)-4294967296XY.
\end{align*}
\end{lemma}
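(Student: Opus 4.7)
The plan is to mimic the strategy of Lemmas \ref{lem: modpoly m=3}, \ref{modulart2N3}, and \ref{modulart3N2}, reducing to a modular polynomial that is already known, in this case the level-$m$ modular polynomial for $t_2$ from Lemma \ref{modulart2N3}. The first ingredient is the rational relationship
$$t_{2,4}(\tau) \;=\; \frac{-4\, t_2(\tau)}{\bigl(1-t_2(\tau)\bigr)^2},$$
which is verified by a direct substitution of the eta-quotient formulas from Table \ref{tab:Hauptmoduln}: one checks that $1-t_2(\tau) = (\eta(\tau)^{24}+64\eta(2\tau)^{24})/\eta(\tau)^{24}$ and $-4t_2(\tau) = 256\eta(2\tau)^{24}/\eta(\tau)^{24}$, whose ratio equals $t_{2,4}(\tau)$. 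Geometrically this reflects the fact that $t_{2,4}$, a Hauptmodul for $\Gamma_0(2)^+$, is invariant under the Atkin--Lehner involution $w_2$, which acts on the $t_2$-line by $t_2\mapsto 1/t_2$; accordingly, the two roots in $s$ of the quadratic below multiply to $1$.

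For $m\in\{3,5\}$, set $s = t_2(\tau)$, $t = t_2(m\tau)$, $X = t_{2,4}(\tau)$, and $Y = t_{2,4}(m\tau)$. Then $s$ and $t$ satisfy the level-$m$ modular polynomial $\Phi_m^{t_2}(s,t)=0$ from Lemma \ref{modulart2N3}, while the relationship above gives the quadratic constraints
$$Xs^2 - (2X-4)s + X = 0, \qquad Yt^2 - (2Y-4)t + Y = 0.$$
I would then eliminate $s$ and $t$ from this three-equation system by iterated resultants, producing a polynomial $\Psi_m(X,Y)\in\ZZ[X,Y]$ that vanishes on $(X,Y) = (t_{2,4}(\tau), t_{2,4}(m\tau))$. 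Because each of $s$ and $t$ has two branches over $X$ and $Y$, respectively, this $\Psi_m$ will in general contain extraneous factors corresponding to Atkin--Lehner conjugate pairings.

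The final step is to isolate the true modular polynomial, which by a coset count for $\Gamma_0(2)^+$ has bidegree $(4,4)$ for $m=3$ and $(6,6)$ for $m=5$, matching the stated formulas. I would factor $\Psi_m$ over $\QQ[X,Y]$ and identify the correct irreducible factor of the expected bidegree by substituting the $q$-expansions of $t_{2,4}(\tau)$ and $t_{2,4}(m\tau)$ and confirming that the factor vanishes past a Sturm-type bound supplied by Theorem \ref{thm:Sturm}; a single scalar normalization is then fixed by matching leading coefficients. The main obstacle will be the algebraic complexity of the elimination for $m=5$: since $\Phi_5^{t_2}$ already has bidegree $(6,6)$ with very large integer coefficients, the iterated resultants balloon in size, and cleanly separating the genuine degree-$(6,6)$ modular polynomial from the spurious factors is the computationally delicate part of the argument. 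Alternatively, one could bypass the elimination entirely and simply solve for the coefficients of a bidegree-$(d,d)$ ansatz by plugging in enough terms of the $q$-expansion of $t_{2,4}$, but the resulting dense linear system is itself nontrivial to control for $m=5$.
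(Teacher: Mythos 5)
Your proposal is correct in substance but takes a genuinely different route from the paper. The covering relation you start from, $t_{2,4}=-4t_2/(1-t_2)^2$, is indeed an identity (both sides equal $256\eta(\tau)^{24}\eta(2\tau)^{24}/(\eta(\tau)^{24}+64\eta(2\tau)^{24})^2$, and it is consistent with the $w_2$-invariance $t_2\mapsto 1/t_2$), so eliminating $s=t_2(\tau)$ and $t=t_2(m\tau)$ from your two quadratics together with $\Phi_m^{t_2}(s,t)=0$ of Lemma \ref{modulart2N3} does produce a polynomial vanishing identically on $(t_{2,4}(\tau),t_{2,4}(m\tau))$, and the genuine bidegree-$(m+1,m+1)$ modular polynomial is one of its irreducible factors; your factor-and-identify step (which can be made rigorous simply by checking that the discarded factors have nonvanishing $q$-expansions, so that the surviving factor must vanish identically — no Sturm bound is strictly needed for the elimination route, though the precise group on which Theorem \ref{thm:Sturm} would be invoked needs care if you go that way) then pins it down. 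The paper instead works on the intermediate curves: it takes the known Hauptmodul $u=\left(\frac{\eta(6\tau)\eta(3\tau)}{\eta(\tau)\eta(2\tau)}\right)^4$ of $\Gamma_0(6)^{+2}$ (resp. $u=\left(\frac{\eta(10\tau)\eta(5\tau)}{\eta(\tau)\eta(2\tau)}\right)^2$ of $\Gamma_0(10)^{+2}$), writes $X=t_{2,4}$ as an explicit rational function of $u$, uses the Atkin--Lehner involution $\omega_3$ (resp. $\omega_5$) to get $u\mapsto 1/(81u)$ (resp. $1/(25u)$) and hence $Y=X(\omega_m\tau)$ as a second rational function of $u$, and eliminates the single variable $u$. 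That parametrization gives the correct bidegree automatically and avoids extraneous components, at the cost of having to identify the auxiliary Hauptmoduln and their Atkin--Lehner action; your approach reuses the already-established $t_2$ modular polynomials and only the simple degree-$2$ covering, at the cost of a heavier two-variable elimination whose spurious factors (and very large coefficients at level $5$) must be stripped off, exactly the computational burden you flag.
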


\begin{proof}
In this case the group we are considering is $\Gamma_0(2)^{+2}:=\langle\Gamma_0(2), \omega_2\rangle$. However, we first consider $\Gamma_0(6)^{+2}:=\langle\Gamma_0(6), \omega_2\rangle$, which is an index-$4$ subgroup of $\Gamma_0(6)^{+2}$.  It is known that that $u=\left(\frac{\eta(6\tau)\eta(3\tau)}{\eta(\tau)\eta(2\tau)}\right)^4$ is a Hauptmodul on  
$\Gamma_0(6)^{+2}$ (see \cite{Chan-Verrill} for example), so $X$ can be written as a rational function of degree $4$ in $u$.  In particular, one can check that
$$
X=\frac{256u}{(1+27u)^4}.
$$
Next, we observe that since $\omega_3$ normalizes $\Gamma_0(6)^{+2}$, $u(\omega_3\tau)$ is also a hauptmodul and $u(\omega_3\tau) =\frac{au+b}{cu+d}(\tau)$ for some $\smat abcd \in \GL_2(\CC)$.  In particular, for $\omega_3=\frac 1{\sqrt 3}\smat 3{-2}6{-3}$, we have $\omega_3\tau=\frac{3\tau-2}{6\tau-3}$.  Thus using the transformation law for the $\eta$-function we can relate $u(\omega_3\tau)$ and $u(\tau)$, namely,
\[
u(\omega_3\tau) = \frac{\eta^4\left(6\frac{3\tau-2}{6\tau-3}\right)\eta^4\left(3\frac{3\tau-2}{6\tau-3}\right)}{\eta^4\left(\frac{3\tau-2}{6\tau-3}\right)\eta^4\left(2\frac{3\tau-2}{6\tau-3}\right)}
    =   \frac{\eta^4\left(\smat 3{-2}2{-1}\tau\right)\eta^4\left(\smat 3{-4}1{-1}2\tau\right)}{\eta^4\left(\smat 1{-2}2{-3}3\tau\right)\eta^4\left(\smat 1{-4}1{-3}6\tau\right)} = \frac1{81 u(\tau)}. 
\]
Hence, 
$$
Y(\tau):=X(\omega_3\tau)=\frac{256u^3}{(1+3u)^4},
$$
and the rational function determined by the relations $X=\frac{256u}{(1+27u)^4}$ and  $Y=\frac{256u^3}{(1+3u)^4}$ gives rise to the desired level-$3$ polynomial for $X$.

Next we obtain the polynomial for level-$5$. Consider the following Atkin-Lehner involutions for $\G_0(10)$,
$$
\omega_2=\frac 1{\sqrt 2}\smat2{-1}{10}{-4}, \quad \omega_5=\frac 1{\sqrt 5}\smat52{10}{5}.
$$
Similar to the previous case, we start with the relation between $X$ and the given Hauptmoul 
$u=\left(\frac{\eta(10\tau)\eta(5\tau)}{\eta(\tau)\eta(2\tau)}\right)^2$
for $\Gamma_0(10)^{+2}:=\langle\Gamma_0(10), \omega_2\rangle$. One can check that 
$$
  X=\frac{256u}{(1+25u)^4(25u^2+6u+1)},
$$
and 
$$
  u(\omega_5\tau)=\frac 1{25}
  \left(  \frac{\eta(\tau)\eta(2\tau)}{\eta(10\tau)\eta(5\tau)}
  \right)^2=\frac1{25}\frac1{u(\tau)}. 
$$
Hence, 
$$
   Y(\tau):=X(\omega_5\tau)=\frac{256u^5}{(u+1)^4(25u^2+6u+1)},
$$
from which we obtain the desired level-$5$ polynomial.
\end{proof}

\begin{lemma}
For $t_{2,6}(\tau) = \frac{ 108\eta(\tau)^{12}\eta(3\tau)^{12}}{(\eta(\tau)^{12}+27\eta(3\tau)^{12})^2}$, the level-$2$ and level-$5$ modular polynomials are, respectively,
\label{lem: modpoly m=6}
\begin{align*} 
 \Phi_2(X,Y)=& 4X^3Y^3-12(X^3Y^2+X^2Y^3)+12(X^3Y+XY^3)-381X^2Y^2-4(X^3+Y^3)\\
 &-336(X^2Y+XY^2)+432XY,\\
\Phi_5(X,Y)=&262144000000X^6Y^6+19660800000X^6Y^5+19660800000X^5Y^6+614400000X^6Y^4\\
&-2550877126656X^5Y^5+614400000X^4Y^6+10240000X^6Y^3+2094980505600X^5Y^4\\
&+2094980505600X^4Y^5+10240000X^3Y^6+96000X^6Y^2-128213414400X^5Y^3\\
&-4716435974400X^4Y^4-128213414400X^3Y^5+96000X^2Y^6+480X^6Y\\
&+1141065600X^5Y^2+3568236045600X^4Y^3+3568236045600X^3Y^4+1141065600X^2Y^5\\
&+480XY^6+X^6-1221150X^5Y+75265374975X^4Y^2-4489016056900X^3Y^3\\
&+75265374975X^2Y^4-1221150XY^5+Y^6+31422600X^4Y+309367560600X^3Y^2\\
&+309367560600X^2Y^3+31422600XY^4-160088400X^3Y+101058937200X^2Y^2\\
&-160088400XY^3+264539520X^2Y+264539520XY^2-136048896XY.
  \end{align*}
\end{lemma}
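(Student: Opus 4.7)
The plan is to adapt the lifting technique used in the proofs of Lemmas \ref{modulart2N3} and \ref{modulart3N2}, which derive modular polynomials for a Hauptmodul on a larger group from a known modular polynomial on a subgroup via an algebraic substitution.

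The first step is to exploit that $\Gamma_0(3)$ sits inside $\Gamma_0(3)^+$ as an index-$2$ subgroup, so that $t_{2,6}$ can be expressed in terms of the Hauptmodul $t_3$. A short calculation from the definitions in Table \ref{tab:Hauptmoduln} yields
\[
t_{2,6}(\tau) = \frac{-4\, t_3(\tau)}{(1-t_3(\tau))^2},
\]
which plays, for $t_{2,6}$, the role that $j = 64(4t_2-1)^3/t_2$ plays for $t_2$ in Lemma \ref{modulart2N3}.

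For the level-$2$ case, I would combine this identity with the level-$2$ modular polynomial $\Phi_2^{t_3}(s,t)$ for $t_3$ given in Lemma \ref{modulart3N2}. Treating $s = t_3(\tau)$ and $t = t_3(2\tau)$ as auxiliary variables constrained by $\Phi_2^{t_3}(s,t)=0$, I would eliminate $s$ and $t$ from the two substitutions $X(1-s)^2 + 4s = 0$ and $Y(1-t)^2 + 4t = 0$ via successive resultants, producing a polynomial in $X = t_{2,6}(\tau)$ and $Y = t_{2,6}(2\tau)$. Because $t_3 \mapsto t_{2,6}$ is $2$-to-$1$, this eliminant will carry extraneous factors; the correct irreducible component, which is $\Phi_2(X,Y)$ for $t_{2,6}$, is pinned down by matching against the leading Fourier coefficients of $t_{2,6}(\tau)$ and $t_{2,6}(2\tau)$. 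For the level-$5$ case, I would first produce the analogous level-$5$ polynomial for $t_3$ by starting from Sutherland's classical level-$5$ polynomial for $j$ and applying $j = -27(t_3-1)(9t_3-1)^3/t_3$ as in the proof of Lemma \ref{modulart3N2}; then the same elimination procedure yields $\Phi_5(X,Y)$ for $t_{2,6}$. As a final verification, I would plug the $q$-expansions of $t_{2,6}(\tau)$ and $t_{2,6}(N\tau)$ into the candidate polynomial and confirm vanishing to an order dictated by a Sturm-type bound (Theorem \ref{thm:Sturm}), which upgrades the computational check to a genuine identity.

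The main obstacle I expect is the size of the level-$5$ calculation. Sutherland's classical $\Phi_5^{j}$ already has enormous integer coefficients, and after the substitution in $t_3$ and two successive resultants in $s$ and $t$, the intermediate polynomial in $X, Y$ becomes unwieldy, requiring careful factorization to isolate the degree-$6$ component matching the statement. An alternative route, paralleling the proof of Lemma \ref{lemma:modpoly m=4 N=3}, would instead identify a genus-zero subgroup of $\Gamma_0(3)^+$ of moderate index admitting an $\eta$-quotient Hauptmodul $u$, parameterize both $X$ and $Y$ as low-degree rational functions of $u$ using the transformation law of $\eta$ under the appropriate Atkin-Lehner involution, and eliminate $u$; this typically produces smaller intermediate expressions in the level-$5$ case. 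In either approach, the conceptual input is straightforward and the bulk of the work is a large but mechanical symbolic computation.
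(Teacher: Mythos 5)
Your proposal is sound, but note that the paper itself offers no proof of this particular lemma: per the preamble of the appendix, the $t_{2,6}$ polynomials are among those determined computationally from Fourier expansions, with proofs written out only for some neighboring lemmas ``to illustrate'' the covering-map method. So there is no paper proof to match against; what you propose is a legitimate derivation in the same spirit. Your key identity is correct: writing $w=\eta(3\tau)^{12}/\eta(\tau)^{12}$ one gets $t_{2,6}=108w/(1+27w)^2=-4t_3/(1-t_3)^2$, and this is invariant under $t_3\mapsto 1/t_3=t_3(w_3\tau)$, consistent with the degree-$2$ cover $X_0(3)\to X_0(3)^{+}$. Two remarks on the mechanics. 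First, your primary route runs in the opposite direction from the proofs of Lemmas \ref{modulart2N3} and \ref{modulart3N2}: there the paper substitutes the larger group's Hauptmodul as an explicit rational function of the smaller group's and merely clears denominators, whereas you must genuinely eliminate $s,t$ by resultants because you are descending to the quotient; this works (the image of the irreducible curve $\Phi_N^{t_3}=0$ under $(s,t)\mapsto(-4s/(1-s)^2,-4t/(1-t)^2)$ is exactly the curve $\Phi_N^{t_{2,6}}=0$, with only formal extraneous factors from the elimination), but your ``alternative route'' via an intermediate eta-quotient Hauptmodul and Atkin--Lehner involutions — here $\Gamma_0(6)^{+3}$ for level $2$ and $\Gamma_0(15)^{+3}$ for level $5$ — is precisely the method the paper uses to prove Lemma \ref{lemma:modpoly m=4 N=3}, and is likely the cleaner path for the level-$5$ computation. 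Second, a small caveat on your final verification step: $t_{2,6}$ has its pole at the order-$6$ elliptic point (Table \ref{tab:Hauptmoduln} assigns it the value $\infty$ there, not at the cusp), so $\Phi_N\bigl(t_{2,6}(\tau),t_{2,6}(N\tau)\bigr)$ has poles in $\hh$ and Theorem \ref{thm:Sturm} as stated (poles only at $i\infty$) does not apply directly; either clear denominators by multiplying by suitable powers of $\bigl(\eta(\tau)^{12}+27\eta(3\tau)^{12}\bigr)^2$ and its level-$N$ analogue before invoking the bound, or dispense with Sturm altogether by showing each discarded irreducible factor of the eliminant has a nonzero low-order $q$-coefficient, which forces the remaining factor to vanish identically. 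With those adjustments the argument is complete, the remaining work being the large but mechanical symbolic computation you already anticipate.
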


\section{Appendix -- Special Values} 
In this appendix, we list the special values used in this article in Tables \ref{tab:jvalues}, \ref{tab:Ekvalues}, \ref{tab:E2kvalues}, and \ref{tab:etavalues}. Most of the values are obtained by either applying the Chowla-Selberg formula (see for example \cite[Eq. (1)]{ChapmanHart}), or using known or obtainable values together with relations between modular functions.  We first give an example of each of these methods below. 
	
\begin{example}\label{CS_ex} Let $\Delta(\tau):=\eta(\tau)^{24}$, the normalized weight-$12$ Hecke eigenform  on $\PSL_2(\ZZ)$. Then   
		$$
		 \Delta(\sqrt{2}i) = \frac{1}{2^{33}\pi^{18}}\G\left( \frac18 \right)^{12} \!\!\! \G\left( \frac38 \right)^{12}.
		$$
\begin{proof}Using the Chowla-Selberg formula \cite[Eq. (1)]{ChapmanHart}, we compute $\Delta(\sqrt{2}i)$ in terms of Gamma functions.  In particular, $\QQ(\sqrt{-8})=\QQ(\sqrt{-2})$ has class number $1$, and the unique reduced binary quadratic form with discriminant $-8$ is $x^2+2y^2$. Moreover, $\pm 1$ are the only roots of unity in $\QQ(\sqrt{-2})$.  Thus the Chowla-Selberg formula gives that
		\[
		\Delta(\sqrt{2}i) = (16\pi)^{-6} \prod_{m=1}^8 \G\left( \frac{m}{8} \right)^{6\left(\frac{-8}{m}\right)},
		\]
		where in the powers of the Gamma values are Kronecker symbols.  Evaluating the Kronecker symbols we have that $\left(\frac{a}{m}\right)=0$ when $a,m$ are both even, and $\left( \frac{-8}{1} \right)=\left( \frac{-8}{3} \right) =1$, while $\left( \frac{-8}{5} \right)=\left( \frac{-8}{7} \right) = -1$.  Thus we obtain 
		\[
		\Delta(\sqrt{2}i) = \frac{1}{2^{24}\pi^6} \frac{\G\left( \frac18 \right)^6\G\left( \frac38 \right)^6}{\G\left( \frac58 \right)^6\G\left( \frac78 \right)^6}.
		\]
		By the $\G$-reflection formula, $\G(1/8)\G(7/8)=\frac{\pi}{\sin(\pi/8)}$ and $\G(3/8)\G(5/8)=\frac{\pi}{\sin(3\pi/8)}$.  Thus we have
		\[
		\Delta(\sqrt{2}i) = \frac{(\sin(\pi/8)\sin(3\pi/8))^6}{2^{24}\pi^{18}}\G\left( \frac18 \right)^{12} \!\!\!  \G\left( \frac38 \right)^{12}.
		\]
		Using half-angle trigonometric formulas we calculate that $\sin(\pi/8)=(1/2)\sqrt{2-\sqrt2}$ and $\sin(3\pi/8)=(1/2)\sqrt{2+\sqrt2}$ so that $(\sin(\pi/8)\sin(3\pi/8))^6=2^{-9}$, which gives the desired value.
\end{proof}
	\end{example}

\begin{example}
Let $X(\tau)$ be the Hauptmodul $-64\frac{\eta(2\tau)^{24}}{\eta(\tau)^{24}}$ for $\G_0(2)$. We have \begin{align*}
X(i/\sqrt{6}) & =-17-12\sqrt{2}, \\
X(i\sqrt{6}) & =6\sqrt2+\frac{71}{8}-\frac{21}4\sqrt 3-\frac{27}8\sqrt 6, \\
X(i\sqrt{3/2}) & =-17+12\sqrt2. 
\end{align*}
\end{example}	
\begin{proof} 
We first establish the $j$-value
\begin{equation}\label{jval}
j(i\sqrt{6})= j(i/\sqrt{6})= 1728(1399 + 988\sqrt2).
\end{equation}
Since $i\sqrt6$ is a CM point and $\QQ(i\sqrt6)$ a CM field of discriminant $-24$ and class number $2$, Class Field Theory gives that $\mathbb{Q}(j(i\sqrt6))=\mathbb{Q}(\sqrt2)$ and $j(i\sqrt6)\in \mathbb{Z}[\sqrt2]$.  Moreover, since the lattices $\ZZ + i\sqrt6\ZZ$ and $2\ZZ + i\sqrt6\ZZ$ are inequivalent as ideals of $\ZZ[i\sqrt6]$, the $j$-values $j(i\sqrt{6})$ and $j(i\sqrt{6}/2)$ are Galois conjugates, so there exist $a,b\in \ZZ$ such that $j(i\sqrt{6}) = a+b\sqrt2$ and $j(i\sqrt{6}/2) = a-b\sqrt2$.  These are the $j$-invariants of elliptic curves with CM discriminant $-24$ over a quadratic field, and the L-functions and Modular Forms Database \cite{lmfdb} shows that the only $j$-values of such curves are $1728(1399 + 988\sqrt2)$ and $1728(1399 - 988\sqrt2)$.  As the $j$-values are finite at these points, the Fourier expansion of $j$ allows the use of numerical approximation to determine which value is which.  (See \cite{Cox, Silverman-adv} for example.)

The relation between $j$ and $X$ is 
\begin{equation}\label{jrelation}
j=\frac{64(4X-1)^3}{X},
\end{equation}
and solving the equation $\frac{64(4X-1)^3}{X}=1728(1399 + 988\sqrt2)$, yields the three solutions
$$
6\sqrt 2+\frac{71}{8}-\frac{21}4\sqrt3-\frac{27}8\sqrt 6, \quad 6\sqrt 2+\frac{71}{8}+\frac{21}4\sqrt3+\frac{27}8\sqrt6, \quad  -17-12\sqrt{2}. 
$$ 
To nail down which of the three values above are $X(i/\sqrt{6})$ and $X(i\sqrt{6})$, one can plug $i/\sqrt 6$ and $i\sqrt 6$ respectively into the Fourier expansion of $X$ to approximate and recognize the desired values.  
		
To find the third value $X(i\sqrt{3/2})=X(i\sqrt{6}/2)$, we can use either of the following two methods.  First, recall from \eqref{level2jpoly} that level-$2$ polynomial satisfied by $x=j(\tau)$ and $y=j(2\tau)$ is
\begin{multline*}
\Phi_2(x,y)= x^3+y^3-x^2y^2+1488 (xy^2+x^2y)-162000(x^2+y^2)\\
+40773375xy+8748000000(x+y)-157464000000000.
\end{multline*}
Thus using \eqref{jval} to solve the equation 
$$
\Phi_2(x, j(i\sqrt{6}))= 0,
$$
yields the possible values of $j(i\sqrt{6}/2)$, so we can use the Fourier expansion of $j(\tau)$ as above to approximate $j(i\sqrt{6}/2)$ and determine which is the correct value.  Then using \eqref{jrelation} we can similarly deduce the desired value of $X(i\sqrt{3/2})$.

Alternatively, using the transformation law for the $\eta$-function we note that
$$
X\left(\frac{-1}{2\tau}\right)=-64 \frac{\eta\(\frac{-1}{\tau}\)^{24}}{\eta\(\frac{-1}{2\tau}\)^{24}}=\frac 1{X(\tau)}.
$$
Therefore, letting $\tau=i/\sqrt{6}$ gives that 
\begin{equation}
\label{t2value}
X(i\sqrt{3/2})=1/(-17-12\sqrt{2})=-17+12\sqrt2. 
\end{equation}
\end{proof}

\subsection{Tables of special values}

In Tables \ref{tab:jvalues} and \ref{tab:etavalues} we indicate references for known or determined values.  We also indicate when a value is obtained directly or from previous values in the table using one or more of the following methods labeled A-K below.  
\begin{itemize}
\item[A]: Use the transformation formula $\eta(-1/\tau)=\sqrt{-i\tau}\eta(\tau)$ together with the value for $\eta(-1/\tau_0)$ to find the value of $\eta(\tau_0)$.
\item[B]: Use the relationship between $t_2(\tau)=-64/j_{2B}(\tau)$ and $j(\tau)$ ($t_2$ is given in terms of $\eta(\tau)$ and $\eta(2\tau)$) along with the values of $j(\tau_0), \eta(\tau_0)$ to find the value of $\eta(2\tau_0)$.
\item[C]: Use the action of the Atkin-Lehner involution $W_3$ on $t_{2,6}(\tau)$ sending $i\sqrt{6}$ to $i/3\sqrt{6}$ along with the value of $\eta(i\sqrt{6})$.
\item[D]: Use the modular polynomial $\phi_3=x^4 + 36x^3 + 270x^2 - xj + 756x + 729$ relating $x=-27t_3(\tau)$ to $j(\tau)$ along with the values of $j(\tau_0)$ and $\eta(\tau_0)$ to find the value of $\eta(3\tau_0)$.
\item[E]: Use the modular polynomial relating $j(\tau)$ and $j(2\tau)$.
\item[F]: Apply the action of the matrix $S=\smat{0}{-1}{1}{0}$.
\item[G]: Apply the action of the matrix $T=\smat{1}{1}{0}{1}$.
\item[H]: Use the modular polynomial relating $j(\tau)$ and $j(5\tau)$.
\item[I]: Use the transformation formula $\eta(\gamma\tau)^{24}=(c\tau+d)^{12}\eta(\tau)^{24}$ for $\gamma \in SL_2(\mathbb{Z})$.
\item[J]: Use the Chowla-Selberg formula, as in Example \ref{CS_ex}.
\item[K]: Use Class Field Theory and the L-functions and Modular Forms Database, as in \eqref{jval}. 
\end{itemize}

In Table \ref{tab:etavalues} we also use the following definitions to preserve space
\begin{align*}
a &=(71639575 + 32038171\sqrt{5} + 77 \sqrt{2838511914270 + 1269421119050 \sqrt{5}}),\\
b &=(6 \cdot 10^{2/3} (9125 + 4081\sqrt{5})),\\
c &=(-647+288 \sqrt{5}+9\sqrt{2(5145-2300\sqrt{5})}).
\end{align*}

To compute the special values in Tables \ref{tab:Ekvalues} we utilize the relationship between $E_4$, $E_6$, and $\Delta$ and in \ref{tab:E2kvalues} we utilize the relationship $E_{2,2}(\tau) =-(\theta_3(2\tau)^4+\theta_2(2\tau)^4).$

\FloatBarrier
\begin{table}[h]
    \begin{tabular}{llll}
    \end{tabular}
\end{table}
\FloatBarrier

\begin{table}[h!]
\renewcommand{\arraystretch}{1.6}
    \caption{$j$-values}
    \label{tab:jvalues}
\begin{center}
\begin{tabular}{ |c|c|c| } 
\hline
$\tau$ & $j(\tau)$ & Method \\
\hline
$i\sqrt{2}$ & $8000$ & \cite{Zagier} \\
\hline
$i/\sqrt{2}$ & $8000$ & F \\
\hline
$(1+i\sqrt{3})/2$ & $0$ & \cite{Zagier}\\
\hline
$i\sqrt{3}$ & $54000$ & E, G\\
\hline
$i/\sqrt{3}$ & $54000$ & F\\
\hline
$2i\sqrt{3}$ & $40500(35010+20213\sqrt3)$ & E\\
\hline
$i\sqrt{3}/2$ & $40500(35010-20213\sqrt3)$ & E\\
\hline
$i\sqrt{6}$ & $1728(1399 + 988\sqrt2)$ & K \\
\hline
$i/\sqrt{6}$ & $1728(1399 + 988\sqrt2)$ &F\\
\hline
$i\sqrt{6}/2$ & $1728(1399 - 988\sqrt2)$ & K\\ 
\hline
$i\sqrt{2/3}$ & $1728(1399 - 988\sqrt2)$ & E\\
\hline
$(1+i\sqrt{6})/2$ & 
$216(27014055899 + 19101822064\sqrt2 - 15596572446\sqrt3 - 11028442113\sqrt6)$  & E, G\\
\hline
$i\sqrt{10}$ & $8640(24635+11016\sqrt5)$ & K \\
\hline
$i/\sqrt{10}$ & $8640(24635+11016\sqrt5)$ &F \\
\hline
 $i\sqrt{5/2}$ & $8640(24635-11016 \sqrt{5})$ & K\\
 \hline
 $i\sqrt{2/5}$ & $8640(24635-11016\sqrt{5})$ & F\\
 \hline
$i\sqrt{5}$ & $320(1975+884\sqrt5)$ & K\\
\hline
$i/\sqrt{5}$ & $320(1975+884\sqrt5)$ & F\\
\hline
$(1+i\sqrt{5})/2$ & $320(1975-884\sqrt5)$ & K\\
\hline
$(1+i/\sqrt{5})/2$ & $320(1975-884\sqrt5)$ & E, G\\
\hline
$(1+i\sqrt{15})/2$  & $-\frac{135}{2}(1415 + 637\sqrt{5})$ & \cite{Zagier} \\
\hline
$i\sqrt{15}$ & $\frac{135}{2} (274207975+122629507\sqrt{5})$ & E, G \\
\hline
$i/\sqrt{15}$  & $\frac{135}{2} (274207975+122629507\sqrt{5})$ & F \\
\hline
\end{tabular}
\end{center}
\end{table}

\FloatBarrier
\begin{table}[h]
    \begin{tabular}{llll}
    \end{tabular}
\end{table}
\FloatBarrier

\begin{table}[h!]
\renewcommand{\arraystretch}{1.6}
    \caption{$\eta$-values}
    \label{tab:etavalues}
\begin{center}
\begin{tabular}{ |c|c|c| } 
\hline
$\tau$ & $\eta(\tau)$ & Method \\
\hline
$i\sqrt3 $ &$ \frac{3^{1/8}}{2^{4/3}\pi} \G\left( 1/3\right)^{3/2}$ & \cite{LLT}\\
\hline
$  (-1+i\sqrt3)/2$ & $\frac{3^{1/8}}{e^{\pi i/24}2\pi}\G(1/3)^{3/2}$ & \cite{LLT} \\
\hline
$i\sqrt{3/2}$ & $\frac{1}{2^{5/4}3^{1/4} \pi^{3/4}}(\sqrt{2}+1)^{1/12}\left(\Gamma(\frac1{24})\Gamma(\frac5{24})\Gamma(\frac7{24})\Gamma(\frac{11}{24})\right)^{1/4}$ &  \eqref{t2value}, J \\ 
\hline
$i\sqrt{2/3}$ &$\frac1{2^{3/2}\pi^{3/4}}(\sqrt{2}+1)^{1/12}\left(\Gamma(\frac1{24})\Gamma(\frac5{24})\Gamma(\frac7{24})\Gamma(\frac{11}{24})\right)^{1/4}$ & A\\
\hline
$i\sqrt{6}$ & $\frac1{2^{5/4}6^{1/4}\pi^{3/4}}(\sqrt{2}-1)^{1/12}\left(\Gamma(\frac1{24})\Gamma(\frac5{24})\Gamma(\frac7{24})\Gamma(\frac{11}{24})\right)^{1/4}$ & \eqref{t2value}, J  \\ 
\hline
$i/\sqrt{6}$ & $\frac1{2^{5/4}\pi^{3/4}}(\sqrt{2}-1)^{1/12}\left(\Gamma(\frac1{24})\Gamma(\frac5{24})\Gamma(\frac7{24})\Gamma(\frac{11}{24})\right)^{1/4}$ & A \\
\hline
$2i\sqrt{2/3}$ & $(\frac3{32}\sqrt 2-\frac{71}{512}+\frac{21}{256}\sqrt 3-\frac{27}{512}\sqrt6)^{1/24} \eta\left(i\sqrt{2/3}\right) $ & E, B\\
\hline
$i/3\sqrt 6$ & $6^{1/4}\eta(i\sqrt 6) ((-12\sqrt 2+12)(12\sqrt2+17)^{2/3}-18\sqrt2 +36(12\sqrt2+17)^{1/3}-39)^{1/12}$ & C\\
\hline
$2i\sqrt{6}$ & $\left(\frac{3-2\sqrt{2}}{2359+1668\sqrt{2}+3\sqrt{1236594+874404\sqrt{2}}}\right)^{1/24} \!\!\!\!\!\! \cdot \frac{1}{ 6\cdot 2^{7/8}\cdot \pi^{3/4}} \left(\G\left(\frac{1}{24}\right)\G\left(\frac{5}{24}\right)\G\left(\frac{7}{24}\right)\G\left(\frac{11}{24}\right)\right)^{1/4}$ & B \\
\hline
$(1+i\sqrt{15})/2$ &$e^{\pi i/24} \frac{1}{2^{3/4}3^{1/4}5^{1/4}\pi^{3/4}}\left(\frac{1+\sqrt{5}}{2}\right)^{-1/12}{\left(\Gamma(\frac1{15})\Gamma(\frac2{15})\Gamma(\frac4{15})\Gamma(\frac{8}{15})\right)^{1/4}}$ & \cite{vanderPoortenWilliams}\\
\hline
$i\sqrt{15}$ & $\frac1{2^{5/3}3^{1/4}5^{1/4}\pi^{3/4}}({\sqrt{5}-1})^{5/12}{\left(\Gamma(\frac1{15})\Gamma(\frac2{15})\Gamma(\frac4{15})\Gamma(\frac{8}{15})\right)^{1/4}}$ & B, G\\
\hline
$i/\sqrt{15}$ & $\frac1{2^{5/3}\pi^{3/4}}({\sqrt{5}-1})^{5/12}{\left(\Gamma(\frac1{15})\Gamma(\frac2{15})\Gamma(\frac4{15})\Gamma(\frac{8}{15})\right)^{1/4}}$ & A\\
\hline
$i\sqrt{3/5} $ & $\frac{1}{2^{7/4} \pi^{3/4}}(\sqrt{5}-1)^{5/12} (123+55 \sqrt{5})^{1/12} \left(\frac{1}{3} \Gamma(\frac{1}{15}) \Gamma(\frac{2}{15}) \Gamma(\frac{4}{15}) \Gamma(\frac{8}{15})\right)^{1/4}$ & D\\
\hline
$i/3\sqrt{15}$ & $\frac{(3 (-377 - 165 \sqrt{5} - b/a^{1/3} + (10a)^{1/3}))^{1/12}}{2 (2 \pi)^{3/4}}(\sqrt{5}-1)^{5/12} \left(\Gamma(\frac1{15})\Gamma(\frac2{15})\Gamma(\frac4{15})\Gamma(\frac{8}{15})\right)^{1/4} $ & D, A\\
\hline
$i\sqrt{5/3}$ & $\frac{1}{15} 3^{1/4} 5^{3/4} \eta(i/3\sqrt{15})$ & A\\
\hline
$i\sqrt{5/2}$ & $\frac{(\sqrt{5}-1)^{-1/4}}{2^{3/2} 5^{1/4}\pi^{5/4}}\left(\Gamma(\frac{1}{40})\Gamma(\frac{7}{40})\Gamma(\frac{9}{40})\Gamma(\frac{11}{40})\Gamma(\frac{13}{40})\Gamma(\frac{19}{40})\Gamma(\frac{23}{40})\Gamma(\frac{37}{40})\right)^{1/4}$ & B, J \\
\hline
$i\sqrt{10}$ & $\frac{(\sqrt{5}-1)^{1/4}}{2^{9/4}5^{1/4}\pi^{5/4}}\left(\Gamma(\frac{1}{40})\Gamma(\frac{7}{40})\Gamma(\frac{9}{40})\Gamma(\frac{11}{40})\Gamma(\frac{13}{40})\Gamma(\frac{19}{40})\Gamma(\frac{23}{40})\Gamma(\frac{37}{40})\right)^{1/4}$ & B, J \\
\hline
$i/\sqrt{10}$ & $\frac{(\sqrt{5}-1)^{1/4}}{4\pi^{5/4}}\left(\Gamma(\frac{1}{40})\Gamma(\frac{7}{40})\Gamma(\frac{9}{40})\Gamma(\frac{11}{40})\Gamma(\frac{13}{40})\Gamma(\frac{19}{40})\Gamma(\frac{23}{40})\Gamma(\frac{37}{40})\right)^{1/4}$ & A\\
\hline
$i\sqrt{2/5}$ & $ \frac{(161+72\sqrt{5})^{1/24}}{22^{3/4}(1+\sqrt{5})^{1/4}\pi^{5/4}}\left(\Gamma(\frac{1}{40})\Gamma(\frac{7}{40})\Gamma(\frac{9}{40})\Gamma(\frac{11}{40})\Gamma(\frac{13}{40})\Gamma(\frac{19}{40})\Gamma(\frac{23}{40})\Gamma(\frac{37}{40})\right)^{1/4}$ & B\\
\hline
$2i\sqrt{2/5}$ & $\frac{\left((161+72\sqrt{5})c\right)^{1/24}}{{4\cdot2^{1/8}(1+\sqrt{5})^{1/4}\pi^{5/4}}}\left(\Gamma(\frac{1}{40})\Gamma(\frac{7}{40})\Gamma(\frac{9}{40})\Gamma(\frac{11}{40})\Gamma(\frac{13}{40})\Gamma(\frac{19}{40})\Gamma(\frac{23}{40})\Gamma(\frac{37}{40})\right)^{1/4}$ & E, B\\
\hline
\end{tabular}
\end{center} 
\end{table}

\FloatBarrier
\begin{table}[h]
    \begin{tabular}{llll}
    \end{tabular}
\end{table}
\FloatBarrier

\begin{table}[h!]
\renewcommand{\arraystretch}{1.8}
    \caption{$E_k$-values}
    \label{tab:Ekvalues}
\begin{center}
\begin{tabular}{ |c|c|c| } 
\hline
$\tau$ & $E_4(\tau)$ & $E_6(\tau)$ \\
\hline
$ i\sqrt{2}$ & $\frac{5}{2^{9}\pi^{6}}\G\left(\frac18\right)^4\G\left(\frac38\right)^4$ & $\frac{7}{2^{13}\pi^{12}}\G\left(\frac18\right)^6\G\left(\frac38\right)^6$ \\
\hline
$i\sqrt{3}$ & $\frac{3^25}{\sqrt[3]{2}\cdot 2^{9}\pi^{8}} \G\left(\frac 13\right)^{12}$ & $\frac{3^311}{ 2^{14}\pi^{12}} \G\left(\frac 13\right)^{18}$ \\
\hline
\end{tabular}
\end{center}
\end{table}

\FloatBarrier
\begin{table}[h]
    \begin{tabular}{llll}
    \end{tabular}
\end{table}
\FloatBarrier

\begin{table}[h!]
 \renewcommand{\arraystretch}{1.8}
     \caption{$E_{2,k}$-values}
    \label{tab:E2kvalues}
\begin{center}
\begin{tabular}{ |c|c|c| } 
\hline
$k$ & $\tau$ & $E_{2,k}(\tau)$ \\
\hline
2 & $i/\sqrt{6}$  &  \makecell{$- \frac{1+\sqrt{2}+(-1+\sqrt{2})^{2/3}(-7925+5695\sqrt{2}+4584\sqrt{3}-3282\sqrt{6})^{1/3}}{32(\sqrt{2}-1)^{2/3}(-71+48\sqrt{2}+42\sqrt{3}-27\sqrt{6})^{1/3}\pi^3}\Gamma\left(\frac{1}{24}\right)\Gamma\left(\frac{5}{24}\right)\Gamma\left(\frac{7}{24}\right)
\Gamma\left(\frac{11}{24}\right)$}\\
\hline
\end{tabular}
\end{center}
\end{table}

\FloatBarrier
\begin{table}[h]
    \begin{tabular}{llll}
    \end{tabular}
\end{table}
\FloatBarrier


\end{document}